\newtheorem{theorem}{Theorem}[section]
\newtheorem{proposition}[theorem]{Proposition}
\theoremstyle{remark}
\newtheorem{remark}[theorem]{Remark}
\newtheorem{example}[theorem]{Example}
\begin{document}

\centerline{\bf LOCALISATION PROPERTY OF BATTLE--LEMARI\'{E} WAVELETS' SUMS}\footnote{The research of the first author was supported by the Russian Science Foundation (project RSF-DST: 16-41-02004).}

\smallskip

\centerline{\textsc{Elena P. Ushakova$^{1,2}$, Kristina E. Ushakova$^{3}$}}  

\smallskip   

\centerline{\it $^{1}$Computing Center of the Far Eastern Branch of the Russian Academy of Sciences, Khabarovsk, Russia}
\centerline{\it $^{2}$Peoples' Friendship University of Russia, Moscow, Russia}
\centerline{\it $^{3}$Immanuel Kant Baltic Federal University, Institute of Living Systems, Kaliningrad, Russia}
\vskip 0.2cm
\centerline{\it E-mails: $^1$elenau@inbox.ru,  $^3$kristina.ushakova@outlook.com}


\vskip 0.2cm

\noindent\textit{Key words}: B--spline, spline wavelet, Battle--Lemari\'{e} scaling function and wavelet, construction, localisation, Nikolskii--Besov spaces.\\ \textit{MSC (2000)}: 42C40, 42B35.

\vskip 0.2cm
\textsc{Abstract:} {\footnotesize
Explicit formulae are given for a type of Battle--Lemari\'{e} scaling functions and related wavelets. Compactly supported sums of their translations are established and applied to alternative norm characterization of sequence spaces isometrically isomorphic to Nikolskii--Besov spaces on $\mathbb{R}$.}

\section{Introduction}

Battle--Lemari\'{e} scaling functions are polynomial splines with simple knots at $\mathbb{Z}$ obtained by orthogonalisation process of the B--splines. For $n\in\mathbb{N}$ the $n-$th order B--spline is defined recursively by
\begin{equation*}
B_n(x):=(B_{n-1}\ast B_0)(x)=\int_0^1 B_{n-1}(x-t)\,dt=\frac{x}{n}B_{n-1}(x)
+\frac{n+1-x}{n}B_{n-1}(x-1)
\end{equation*}  with $B_0=\chi_{[0,1)}$. It is known that $B_n$ generates multiresolution analysis of $L^2(\mathbb{R})$ \cite{Chui,W}. Moreover \cite{Chui}, 
\begin{itemize}
\item[a)] ${\rm supp}\,B_n=[0,n+1]$, $n\in\mathbb{N}\cup\{0\}$ and $B_n(x)>0$ for all $x\in(0,n+1)$, and $B_n\in C^{n-1}$ for $n\ge 1$;
\item[b)] the restriction of $B_n$ to each $[m,m+1]$, $m=0,\ldots,n$, is a polynomial of degree $n$;
\item[c)] the function $B_n(x)$ is symmetrical about $x=(n+1)/2$, that is $B_n(\frac{n+1}{2}-x)=B_n(\frac{n+1}{2}+x)$.\end{itemize}
Battle--Lemari\'{e} scaling functions and related wavelets play an important role in approximation theory, numerical analysis (see e.g. \cite{Me}), image, data and signal processing involving analysis of biological sequences and molecular biology--related signals, etc. \cite{Bio}. On the strength of the differentiation property
\begin{equation}\label{diff}
B'_n(x)=B_{n-1}(x)-B_{n-1}(x-1)\qquad\textrm{for a.a. }x\in\mathbb{R},\qquad n\in\mathbb{N},
\end{equation}
this function class has appeared to be an effective tool for solving problems related to the theory of integration and differentiation operators in function spaces \cite{NU}. There is a number of papers devoted to Battle--Lemari\'{e} scaling functions and wavelets. Most of them deal with their implicit or approximate expressions. An idea of how to find explicit formulae for this function class was given in works by I.Ya. Novikov and S.B. Stechkin (\cite[\S~7]{NS} and \cite[\S~15]{NS1}, see also \cite{NPS} and \cite{NPS1}). The problem, we dealt in \cite{NU}, concerned the operators' compactness and approximation properties. The solution method required, first of all, explicit formulae for the chosen wavelet system. The other important points were in finding their proper transformations and sums in order to localise non--compactly supported scaling functions and wavelets of this type and, making use of \eqref{diff}, connect their components (splines) with splines of lower or higher orders. All these questions were covered in \cite[\S~2.2.2, Lemma 5, Proposition 7, Corollary 8]{NU} for the scaling function and wavelets of the first order only. The results of the present work make us able to continue the study of compactness and approximation properties of integration operators in spaces of functions of higher smoothness than established in \cite{NU}. In the first part of this work we derive exact expressions for a family of Battle--Lemari\'{e} scaling functions and wavelets of all positive integer orders (\S~2). The second part of the paper is devoted to the "localisation property" of the class (\S~3). Namely, we establish compactly supported combinations $\Psi_n$ of  wavelets of Battle--Lemari\'{e} type (see \eqref{psi_fint} in Theorem \ref{PhiPsi}), which contribute to simple connection between two B--splines of different orders by an integration (differential) operator as well as to relation between dilations of $B-$splines (see \eqref{dym1}, \eqref{dym2} and \eqref{dymm}). Similar result is given for the scaling function (see \eqref{phi_fint} in Theorem \ref{PhiPsi} in combination with \eqref{diff}) and both are applied to equivalent norm characteristics in Besov type spaces (Proposition \ref{prop}).  

Throughout the paper, 
we use $\mathbb Z$, $\mathbb N$ and $\mathbb R$ for integers, natural and real numbers, respectively; symbol $\mathbb{C}$ for the complex plane.
We make use of marks $:=$ and
$=:$ for introducing new quantities. 

\section{Construction of Battle--Lemari\'{e} scaling functions and related wavelets} The Fourier transform of the normalised $B$--spline of the $n-$th order has the form
\begin{equation}\label{maska}\hat{B}_n(\omega)=[\hat{B}_0(\omega)]^{n+1}=\mathrm{e}^{-i(n+1)\omega/2}\left(\frac{\sin(\omega/2)}{\omega/2}\right)^{n+1}=\mathrm{e}^{-i(n+1)\omega/4}(\cos(\omega/4))^{n+1}\hat{B}_n(\omega/2).\end{equation} In particular, if $n=1$ then
\begin{equation}\label{B1(2x)}\hat{B}_1(\omega)=\frac{1}{2}\mathrm{e}^{-i\omega/2}\,[1+\cos(\omega/2)]\hat{B}_1(\omega/2)\end{equation} and, therefore,
\begin{equation}\label{N}
B_1(x)=\frac{1}{2}B_1(2x)+B_1(2x-1)+\frac{1}{2}B_1(2x-2).
\end{equation} The general two--scale relation formula for $B_n$, $n\in\mathbb{N}\cup\{0\}$, has the following form \cite[Chapter~4]{Chui}: \begin{equation}\label{two-scale}B_n(x)=2^{-n}\sum_{k=0}^{n+1}\frac{(n+1)!}{k!(n+1-k)!}B_n(2x-k).\end{equation} 

Fixed $n\in\mathbb{N}$ and any $d,\tau\in\mathbb{Z}$ put $B_{n;\,d,\tau}(x):=B_n(2^dx-\tau)$, $x\in\mathbb{R}$. Each $d\in\mathbb{Z}$ let $V_d$ denote the $L^2(\mathbb{R})-$closure of the linear span of the system $\{B_{n;\,d,\tau}\colon \tau\in\mathbb{Z}\}$. It is well--known \cite{Chui} that the spline spaces $V_d$, $d\in\mathbb{Z}$, which are generated by the {\it scaling function} $B_n$, constitute a multiresolution analysis of $L^2(\mathbb{R})$ in the sense that \begin{itemize}
\item[{\rm (i)}] $\ldots\subset V_{-1}\subset V_0\subset V_1\subset\ldots$;
\item[{\rm (ii)}] $\mathrm{clos}_{L^2(\mathbb{R})}\Bigl(\bigcup_{d\in\mathbb{Z}} V_d\Bigr)=L^2(\mathbb{R})$;
\item[{\rm (iii)}] $\bigcap_{d\in\mathbb{Z}} V_d=\{0\}$;
\item[{\rm (iv)}] for each $d$ the $\{B_{n;\,d,\tau}\colon \tau\in\mathbb{Z}\}$ is an unconditional (but not orthonormal) basis of $V_d$.\end{itemize} 
Further, there are the orthogonal complementary subspaces $\ldots, W_{-1},W_0,W_1,\ldots$ such that 
\begin{itemize}
\item[{\rm (v)}] $V_{d+1}=V_d\oplus W_d$ for all $d\in\mathbb{Z}$,
where $\oplus$ stands for $V_d\perp W_d$ and $V_{d+1}=V_d+W_d$. 
\end{itemize} Wavelet subspaces $W_d$, $d\in\mathbb{Z}$, related to the spline $B_n$, are also generated by some basic functions ({\it wavelets}) in the same manner as the spline spaces $V_d$, $d\in\mathbb{Z}$, are generated by the spline $B_n$. 

It is well--known that there exists another scaling function, whose integer  translations form an orthonormal system within the same multiresolution analysis. The function $\phi^{BL}_n\in L^2(\mathbb{R})$ satisfying
\begin{equation*}\label{BLphi}\hat{\phi}^{BL}_n(\omega)=\hat{B}_n(\omega)\left(\sum_{m\in\mathbb{Z}}\left|\hat{B}_n(\omega+2\pi m)\right|^2\right)^{-1/2}\end{equation*} is called the Battle--Lemari\'{e} scaling function \cite{B,B1,L}. Integer translations of ${\phi}^{BL}_n$ form an orthonormal basis in $V_0$ of the multiresolution analysis generated by $B_{n}$.

The $n-$th order Battle-Lemari\'{e} wavelet is the function $\psi^{BL}_n$ whose Fourier transform is
\begin{equation*}\label{BLpsi}\hat{\psi}^{BL}_n(\omega)=-\mathrm{e}^{-i\omega/2}\frac{\overline{\hat{\phi}^{BL}_n(\omega+2\pi)}}
{\overline{\hat{\phi}^{BL}_n(\omega/2+\pi)}}{\hat{\phi}^{BL}_n(\omega/2)}.\end{equation*} Integer translations of ${\psi}^{BL}_n$ form an orthonormal basis in $W_0$ of the multiresolution analysis generated by $B_{n}$. A scaling function $\phi$ and one of its associated wavelets $\psi$ form a wavelet system $\{\phi,\psi\}$. 

In order to derive explicit expressions for spline wavelet systems of all orders $n\in\mathbb{N}$ with properties analogous to $\phi^{BL}_n$ and $\psi^{BL}_n$ we follow the idea from \cite[\S~7]{NS} (see also \cite[\S~15]{NS1}, \cite{NPS} and \cite{NPS1}). 

\subsection{Auxiliary results}
Denote $$\mathbb{P}_n(\omega):=\sum_{m\in\mathbb{Z}}\left|\hat{B}_n(\omega+2\pi m)\right|^2$$ and write
\begin{equation}\label{phi_n}\hat{\phi}_n^\#(\omega)=\frac{\hat{B}_n(\omega)}{\sqrt{\mathbb{P}_n(\omega)}}=m_n^\#(\omega/2)\hat{\phi}^\#_n(\omega/2),\end{equation} where the low--pass filter $m_n^\#(\omega)$ (see \eqref{maska}) is given by $$m_n^\#(\omega):=
\mathrm{e}^{-i(n+1)\omega/2}(\cos(\omega/2))^{n+1}\sqrt{\frac{\mathbb{P}_n(\omega)}{\mathbb{P}_n(2\omega)}}.$$ A wavelet function ${\psi}_n^\#$ related to $\phi_n^\#$ must be of the form 
\begin{equation}\label{psi_n}\hat{\psi}_n^\#(\omega)={M}_n^\#(\omega/2)\hat{\phi}_n^\#(\omega/2),\end{equation}
where the high--pass filter ${M}_n^\#(\omega)$ is given by \begin{multline}\label{psi_n'}{M}_n^\#(\omega):=\mathrm{e}^{-i\omega}\overline{m_n^\#(w+\pi)}=\mathrm{e}^{-i\omega}\mathrm{e}^{i(n+1)(\omega+\pi)/2}(\sin(\omega/2))^{n+1}
\sqrt{\frac{\mathbb{P}_n(\omega+\pi)}{\mathbb{P}_n(2\omega)}}\\=\mathrm{e}^{-i\omega}\mathrm{e}^{i(n+1)\omega/2}(i\sin(\omega/2))^{n+1}
\sqrt{\frac{\mathbb{P}_n(\omega+\pi)}{\mathbb{P}_n(2\omega)}}=\mathrm{e}^{-i\omega}\left(\frac{\mathrm{e}^{i\omega}-1}{2}\right)^{n+1}
\sqrt{\frac{\mathbb{P}_n(\omega+\pi)}{\mathbb{P}_n(2\omega)}}.\end{multline} 

Consider $\mathbb{P}_n(\omega)$, $n\in\mathbb{N}$. It is well--known that $\mathbb{P}_0(\omega)=1$ and $\phi_0^\#$, $\psi_0^\#$ form the Haar system. Write
\begin{equation}\label{Phi}\mathbb{P}_n(\omega)=(2\sin(\omega/2))^{2(n+1)}\sum_{k\in\mathbb{Z}}\frac{1}{(\omega+2\pi k)^{2(n+1)}}=:\rho_{2(n+1)}(\omega).\end{equation} The expressions \eqref{Phi} were investigated in terms of rational polynomials in \cite{Scho}. Let us shortly explain milestones of this well--known theory (see also \cite[\S~6.4]{Chui}).
Differentiation of \eqref{Phi} yields 
$$\rho_{m+1}(\omega)=\cos(\omega/2)\rho_m(\omega)-\frac{2}{m}\sin(\omega/2)\rho'_m(\omega),\qquad m-\textrm{even}.$$ As it was mentioned before, $\rho_2(\omega)=1$. Following \cite[Part~B, \S~1.1]{Scho}, we express $\rho_m(\omega)$ as polynomials in the variable $z=\cos(\omega/2)$ by means of $\rho_m(\omega)=U_{m-2}(\cos(\omega/2)$ to obtain the recurrence relation for \begin{equation}\label{doshlo}U_{m+1}(z)=
z\,U_m(z)+\frac{1}{m+2}(1-z^2)U'_m(z)\end{equation} with $U_0(z)=1$. Being an even polynomial for even $m$, the $U_{m-2}(z)$ may be expressed as a polynomial $U^\ast_\kappa(y)$ in the variable $y=1-z^2$ of degree $\kappa=(m-2)/2$. By \cite[Part~B, Lemma~2]{Scho}, $U_{m-2}(z)$ has all simple and purely imaginary zeros $\{z_1,\ldots,z_{m-2}\}$. The change of variable transforms them into the zeros $\{\alpha_1,\ldots,\alpha_\kappa\}$ of $U^\ast_\kappa(y)$, which must all be positive and not less than $1$. Thus, since $U^\ast_\kappa(0)=U_{m-2}(1)=1$, $$\rho_{2(n+1)}(\omega)=U_{2n}(x)=\left(1-\frac{y}{\alpha_1}\right)\ldots \left(1-\frac{y}{\alpha_n}\right).$$ We may and shall assume that  $y>1$ (see \cite[Part~B, Lemma~3]{Scho}).
To find zeros of $1-y/\alpha_j$, $j=1,\ldots,n$, with respect to $\mathrm{e}^{\pm i\omega}$ in this case, we write $\alpha_j-y=\alpha_j-\sin^2(\omega/2)=0$, that is
$$2(2\alpha_j-1)+\mathrm{e}^{i\omega}+\mathrm{e}^{-i\omega}=\mathrm{e}^{\pm i\omega}\left(\mathrm{e}^{\mp 2i\omega}+2(2\alpha_j-1)\mathrm{e}^{\mp i\omega}+1\right)=0.$$ Thus, $\mathrm{e}^{\mp i\omega}=-(2\alpha_j-1)\pm2\sqrt{\alpha_j(\alpha_j-1)}$, where $\alpha_j>1$ and, therefore, $-(2\alpha_j-1)\pm2\sqrt{\alpha_j(\alpha_j-1)}\in\mathbb{R}$. Notice that
\begin{equation}\label{doshlo'}r_j:=(2\alpha_j-1)-2\sqrt{\alpha_j(\alpha_j-1)}\in(0,1)\end{equation} and, therefore, $1/r_j:=(2\alpha_j-1)+2\sqrt{\alpha_j(\alpha_j-1)}>1$. Denote $t_j=r_j^{\pm 1}$. Since $t_j=\bar{t_j}$, then
\begin{multline*}1-\frac{\sin^2(\omega/2)}{\alpha_j}=\frac{\mathrm{e}^{\pm i\omega}}{4\alpha_j}(\mathrm{e}^{\mp i\omega}+r_j)(\mathrm{e}^{\mp i\omega}+1/r_j)=
\frac{\mathrm{e}^{\pm i\omega}}{4\alpha_j\,t_j}(\mathrm{e}^{\mp i\omega}+t_j)(\mathrm{e}^{\mp i\omega}t_j+1)\\=
\frac{1}{4\alpha_j\,t_j}(\mathrm{e}^{\pm i\omega}t_j+1)(\mathrm{e}^{\mp i\omega}t_j+1)=
\frac{1}{4\alpha_j\,t_j}|\mathrm{e}^{\pm i\omega}t_j+1|^2.\end{multline*} 
Thus, with $t_j=r_j^{\pm 1}$, $j=1,\ldots,n$,
\begin{gather}\label{Phi1}\mathbb{P}_n(\omega)=\rho_{2(n+1)}(\omega)=\frac{1}{4\alpha_1\,t_1}|\mathrm{e}^{\pm i\omega}t_1+1|^2\ldots \frac{1}{4\alpha_n\,t_n}|\mathrm{e}^{\pm i\omega}t_n+1|^2.
\end{gather} In view of \eqref{phi_n}, \eqref{psi_n}, \eqref{psi_n'} and \eqref{Phi1} 
we construct a wavelet system of Battle--Lemari\'{e} type as follows.

\subsection{The construction} Denote $$\mathbb{A}_{t_j}^\pm(\omega):=
\frac{1}{2\sqrt{\alpha_j\,t_j}}(\mathrm{e}^{\pm i\omega}t_j+1),\qquad j=1,\ldots, n.$$ 
Describe the case $n=1$ before considering the general situation.
\begin{example} If $n=1$ then $\alpha_1=3/2$, $r_1=2-\sqrt{3}$ (see \eqref{doshlo} and \eqref{doshlo'} in \S~2.1) and we have \eqref{N} with 
\begin{equation}\label{B1def}B_1(x)=\begin{cases} x, & \quad\textrm{if}\quad 0\le x<1,\\ 2-x, & \quad\textrm{if}\quad 1\le x<2,\\ 0 & \quad\textrm{otherwise}.\end{cases}\end{equation} Taking into account \eqref{phi_n} and \eqref{Phi1} with $t_1=r_1$, define 
\begin{equation}\label{phiSt}\hat{\phi}_1(\omega)=\frac{\hat{B}_1(\omega)}{\mathbb{A}_{r_1}^\pm(\omega)}={2\sqrt{\alpha_1r_1}}\ \frac{\hat{B}_1(\omega)}{\mathrm{e}^{\pm i\omega}r_1+1}=:\hat{\phi}_1^\pm(\omega).\end{equation} If we choose $t_1=1/r_1$ in \eqref{Phi1} then
\begin{equation}\label{phiSt'}\hat{\varphi}_1(\omega)=2\sqrt{\alpha_1/r_1}\ \frac{\hat{B}_1(\omega)}{\mathrm{e}^{\pm i\omega}/r_1+1}=\frac{2\sqrt{\alpha_1/r_1}}{\mathrm{e}^{\pm i\omega}/r_1}\ \frac{\hat{B}_1(\omega)}{\mathrm{e}^{\mp i\omega}r_1+1}=2\sqrt{\alpha_1r_1}\ \frac{\hat{B}_1(\omega)}{\mathrm{e}^{\mp i\omega}r_1+1}\,\mathrm{e}^{\mp i\omega}=:\hat{\varphi}_1^\pm(\omega),\end{equation} which is a shifted version of $\hat{\phi}_1^\mp(\omega)$ of the form \eqref{phiSt}. 
Further, since for $0<r<1$ \begin{equation}\label{ryadok}\frac{1}{\mathrm{e}^{\pm i\omega}r+1}=\sum_{l=0}^\infty \left(-r\,\mathrm{e}^{\pm i\omega}\right)^l,\end{equation} then we can write \begin{equation}\label{Stphi}\phi_1^{\pm}(x)=2\sqrt{\alpha_1r_1}\sum_{l\ge 0}(-r_1)^{l}B_1(x\pm l)\ \ \textrm{and}\ \ \varphi_1^{\pm}(x)=2\sqrt{\alpha_1r_1}\sum_{l\ge 0}(-r_1)^{l}B_1(x\mp l\mp 1).\end{equation} 
Choosing between ${\pm}$ 
we obtain $\phi_1^+$ and $\varphi_1^-$ with "left tails" of the forms \begin{equation}\label{phi1-}
\phi_1^+(x)=2\sqrt{\alpha_1r_1}\sum_{l\le 0}(-r_1)^{-l} B_1(x-l)\ \ \textrm{and}\ \ 
\varphi_1^-(x)=2\sqrt{\alpha_1r_1}\sum_{l\le 0}(-r_1)^{-l} B_1(x-l+1)=\phi_1^+(x+1)\end{equation} and, analogously, $\phi_1^-$ and $\varphi_1^+$ with "right tails" of the forms \begin{equation}\label{phi1+}
\phi_1^-(x)=2\sqrt{\alpha_1r_1}\sum_{l\ge 0}(-r_1)^l B_1(x-l)\ \ \textrm{and}\ \ 
\varphi_1^+(x)=2\sqrt{\alpha_1r_1}\sum_{l\ge 0}(-r_1)^l B_1(x-l-1)=\phi_1^-(x-1).\end{equation} $\phi_1^\pm$ and $\varphi_1^\mp$ differ by integer shifts only and constitute the same multiresolution analysis of $L^2(\mathbb{R})$.

\begin{figure}
\includegraphics[scale=0.65]{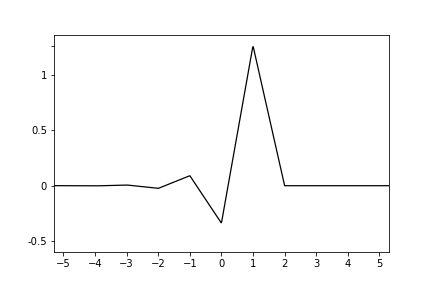}\caption{$\phi_1^+$}
\end{figure}
\begin{figure}
\includegraphics[scale=0.65]{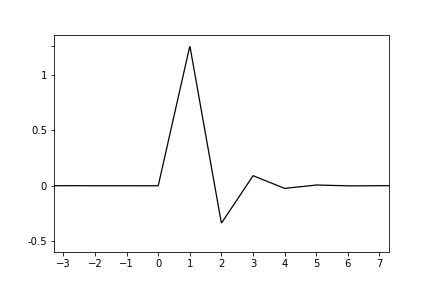}\caption{$\phi_1^-$}
\end{figure}
\begin{figure}
\includegraphics[scale=0.65]{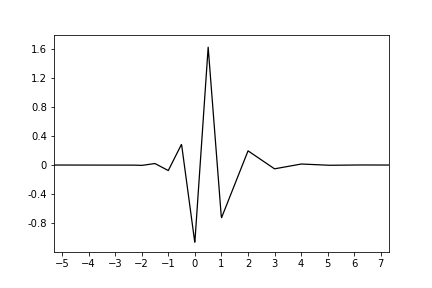}\caption{$\psi_{r_1}^+$}
\end{figure}
\begin{figure}
\includegraphics[scale=0.65]{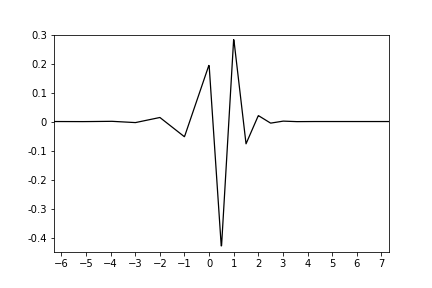}\caption{$r_1\psi_{1/r_1}^+$}
\end{figure}
\begin{figure}
\includegraphics[scale=0.65]{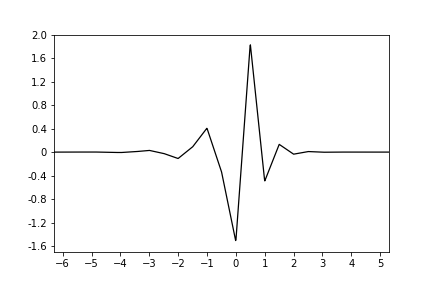}\caption{$\psi_{r_1}^-$}
\end{figure}
\begin{figure}
\includegraphics[scale=0.65]{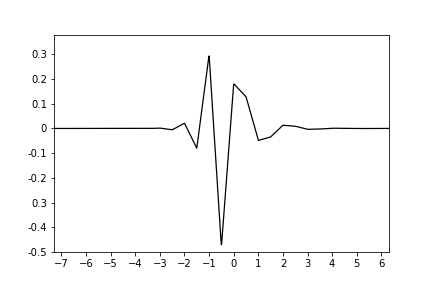}\caption{$r_1\psi_{1/r_1}^-$}
\end{figure}

To construct wavelets $\psi_1$ related to $\phi_1=\phi_1^\pm$ and $\phi_1=\varphi_1^\pm$ we write according to \eqref{psi_n}, \eqref{psi_n'} and \eqref{Phi1}:
$$\hat{\psi}_1(\omega)={M}_1(\omega/2)\hat{\phi}_1(\omega/2),$$ where (see also \eqref{B1(2x)}) $${M}_1(\omega):=\mathrm{e}^{-i\omega}\overline{{H}_1(\omega+\pi)},\qquad {H}_1(\omega)=\frac{\hat{\phi}_1(2\omega)}{\hat{\phi}_1(\omega)},\qquad \frac{\hat{B}_1(2\omega)}{\hat{B}_1(\omega)}=\mathrm{e}^{-i\omega}\cos^2(\omega/2),$$ that is $$H_1(\omega)=\cos^2(\omega/2){\frac{\mathrm{e}^{\pm i\omega}t_1+1}{\mathrm{e}^{\pm 2i\omega}t_1+1}}\mathrm{e}^{- i\omega},\qquad t_1=r_1^{\pm 1}.$$ Thus, taking into account that for any $r$
\begin{equation}\label{e-perehod}\frac{1}{[\mathrm{e}^{\mp i\omega}/r+1] [\mathrm{e}^{\pm i\omega/2}/r+1]}=
\frac{r^2\,\mathrm{e}^{\pm i\omega/2}}{[\mathrm{e}^{\pm i\omega}r+1] [\mathrm{e}^{\mp i\omega/2}r+1]},\end{equation}
we obtain
\begin{multline}\label{psiSt}\hat{\psi}_1(\omega)=-\frac{\sqrt{\alpha_1t_1}}{2}\ \frac{[1-t_1\mathrm{e}^{\mp i\omega/2}]\,[\mathrm{e}^{i\omega/2}-2+\mathrm{e}^{-i\omega/2}]}{[\mathrm{e}^{\mp i\omega}t_1+1] [\mathrm{e}^{\pm i\omega/2}t_1+1]}\hat{B}_1(\omega/2)\\=-\frac{r_1\sqrt{\alpha_1t_1}}{2}\begin{cases}\displaystyle \frac{[1/r_1-\mathrm{e}^{\mp i\omega/2}]\,[\mathrm{e}^{i\omega/2}-2+\mathrm{e}^{-i\omega/2}]}{[\mathrm{e}^{\mp i\omega}r_1+1] [\mathrm{e}^{\pm i\omega/2}r_1+1]}\hat{B}_1(\omega/2), & t_1=r_1\\
\displaystyle\frac{[r_1-\mathrm{e}^{\mp i\omega/2}]\,[\mathrm{e}^{i\omega/2}-2+\mathrm{e}^{-i\omega/2}]}{[\mathrm{e}^{\pm i\omega}r_1+1] [\mathrm{e}^{\mp i\omega/2}r_1+1]}\ \mathrm{e}^{\pm i\omega/2}\ \hat{B}_1(\omega/2), &t_1=1/r_1\end{cases},\end{multline} which means that we can construct two types of $\psi_1$: one having $[1/r_1-\mathrm{e}^{\mp i\omega/2}]$ in the numerator in $\hat{\psi}_1$ and the other one with $[r_1-\mathrm{e}^{\mp i\omega/2}]$. 
Symbol $\psi_{t_1}$ will be used to emphasise our choice of $t_1=r_1^{\pm 1}$ in \eqref{psiSt}.
 
With respect to \eqref{ryadok} and in view of $$[1/t_1-\mathrm{e}^{\mp i\omega/2}]\,[\mathrm{e}^{i\omega/2}-2+\mathrm{e}^{-i\omega/2}]=\mathrm{e}^{i\omega/2}/t_1-(1+2/t_1)+(2+1/t_1)\mathrm{e}^{\mp i\omega/2}-\mathrm{e}^{\mp i\omega}$$ the ${\psi}_{{1}}={\psi}_{{r_1}}^{\pm}$ with $\hat{\psi}_1$ defined by \eqref{psiSt} for $t_1=r_1$, have the following forms:\begin{multline}\label{Stpsi}{\psi}_{r_1}^{\pm}(x)=
-{r_1\sqrt{r_1\alpha_1}}\sum_{k\ge 0}(-r_1)^{k}\sum_{m\ge 0}(-r_1)^{m}\bigl[\frac{1}{r_1}B_1(2x\mp 2k\pm m+1)\\
-(1+\frac{2}{r_1})B_1(2x\mp 2k\pm m)+(2+\frac{1}{r_1})B_1(2x\mp 2k\pm m\mp 1)-B_1(2x\mp 2k \pm m\mp 2)\bigr].\end{multline} Analogously, for the ${\psi}_{{1}}={\psi}_{1/{r_1}}^{\pm}$ with $\hat{\psi}_1$ defined by \eqref{psiSt} for $t_1=1/r_1$, we get
\begin{multline}\label{StpsiD}{\psi}_{1/r_1}^{\pm}(x)=-
{\sqrt{\alpha_1r_1}}\sum_{k\ge 0}(-r_1)^{k}\sum_{m\ge 0}(-r_1)^{m}\bigl[{r_1}B_1(2(x\pm\frac{1}{2})\pm 2k\mp m+1)\\
-(1+{2}{r_1})B_1(2(x\pm\frac{1}{2})\pm 2k\mp m)+(2+{r_1})B_1(2(x\pm\frac{1}{2})\pm 2k\mp m\mp 1)-B_1(2(x\pm\frac{1}{2})\pm 2k\mp m\mp 2)\bigr].\end{multline} To obtain further results of the paper in the case $n=1$ (see  \S\S~3--4) we shall use wavelet systems $\{\phi_1^\pm,\,\psi_{r_1}^\pm\}$, $\{\phi_1^\pm,\,\psi_{1/r_1}^\pm\}$ and their shifted counterparts $\{\phi_1^\pm(\cdot\pm\frac{1}{2}),\,\psi_{r_1}^\pm(\cdot\pm\frac{1}{2})\}$ and $\{\phi_1^\pm(\cdot\mp\frac{1}{2}),\,\psi_{1/r_1}^\pm(\cdot\mp\frac{1}{2})\}$. \end{example}

For general $n\in\mathbb{N}$ we define $\hat{\phi}_n$ as follows:
\begin{equation}\label{'phi_n}\hat{\phi}_n(\omega)=\frac{\hat{B}_n(\omega)}{\mathbb{A}_{t_1}^\pm(w)\ldots \mathbb{A}_{t_n}^\pm(w)}=m_n(\omega/2)\hat{\phi}_n(\omega/2),\end{equation} where (see \eqref{maska}) $$m_n(\omega):=
\mathrm{e}^{-i(n+1)\omega/2}(\cos(\omega/2))^{n+1}\frac{\mathbb{A}_{t_1}^\pm(w)\ldots \mathbb{A}_{t_n}^\pm(w)}{\mathbb{A}_{t_1}^\pm(2w)\ldots \mathbb{A}_{t_n}^\pm(2w)}.$$ The Fourier transform of a wavelet function ${\psi}_n$ related to $\phi_n$ must satisfy the condition
\begin{equation}\label{'psi_n}\hat{\psi}_n(\omega)={M}_n(\omega/2)\hat{\phi}_n(\omega/2),\end{equation} where, with $\mathscr{A}_{t_j}^\mp(w)=\overline{\mathbb{A}_{t_j}^\pm(w+\pi)}=(1-\mathrm{e}^{\mp i\omega}t_j)/(2\sqrt{\alpha_j t_j})$, $j=1,\ldots,n$,
\begin{align}\label{'psi_n'}{M}_n(\omega):=\mathrm{e}^{-i\omega}\overline{m_n(w+\pi)}=&\mathrm{e}^{-i\omega}\mathrm{e}^{i(n+1)(\omega+\pi)/2}(\mathrm{e}^{i\pi}\sin(\omega/2))^{n+1}
\frac{\mathscr{A}_{t_1}^\mp(w)\ldots \mathscr{A}_{t_n}^\mp(w)}{\mathbb{A}_{t_1}^\mp(2w)\ldots \mathbb{A}_{t_n}^\mp(2w)}\nonumber\\=&\mathrm{e}^{-i\omega}\mathrm{e}^{i(n+1)\omega/2}(\mathrm{e}^{i\pi}i\sin(\omega/2))^{n+1}
\frac{\mathscr{A}_{t_1}^\mp(w)\ldots \mathscr{A}_{t_n}^\mp(w)}{\mathbb{A}_{t_1}^\mp(2w)\ldots \mathbb{A}_{t_n}^\mp(2w)}\nonumber\\
=&\mathrm{e}^{i\pi(n+1)}\mathrm{e}^{-i\omega}\left(\frac{\mathrm{e}^{i\omega}-1}{2}\right)^{n+1}
\frac{\mathscr{A}_{t_1}^\mp(w)\ldots \mathscr{A}_{t_n}^\mp(w)}{\mathbb{A}_{t_1}^\mp(2w)\ldots \mathbb{A}_{t_n}^\mp(2w)}.\end{align}
By \eqref{'phi_n}
\begin{equation}\label{phi_nn}
\hat{\phi}_n(\omega)=\frac{2^n\sqrt{\alpha_1\,t_1\ldots\alpha_n\,t_n}\ \hat{B}_n(\omega)}{(\mathrm{e}^{\pm i\omega}t_1+1)\ldots
(\mathrm{e}^{\pm i\omega}t_n+1)}.\end{equation}
Considering \eqref{Phi1}, we can vary $\pm$ in different terms in the denumerator of $\hat{\phi}_n$. But, for simplicity, in this work we limit our attention to the cases with either all "$+$" or all "$-$" only.

If $t_j=r_j$ for all $j=1,\ldots,n$, we write, taking into account \eqref{ryadok},
\begin{equation}\label{ex1}\hat{\phi}_n(\omega)=2^n\sqrt{\alpha_1\,r_1\ldots\alpha_n\,r_n}\sum_{l_1=0}^\infty (-r_1\,\mathrm{e}^{\pm i\omega})^{l_1}\ldots
\sum_{l_n=0}^\infty (-r_n\,\mathrm{e}^{\pm i\omega})^{l_n}\hat{B}_n(\omega),\end{equation} and, in view of $\mathscr{F}(B_n(\cdot-k))(\omega)= \mathrm{e}^{-ik\omega}\mathscr{F}(B_n(\cdot))(\omega)$, where $\mathscr{F}(f):=\hat{f}$, we obtain
\begin{equation}\label{phin}
\phi_n(x)=2^n\sqrt{\alpha_1\,r_1\ldots\alpha_n\,r_n}\sum_{l_1=0}^\infty (-r_1)^{l_1}\ldots
\sum_{l_n=0}^\infty (-r_n)^{l_n}B_n(x\pm l_1\ldots \pm l_n).\end{equation}
If $t_j=1/r_j$ for at least one of $j=1,\ldots,n$ we write, similarly to the case $n=1$:
\begin{equation}\label{buka}
\frac{1}{\mathbb{A}_{1/r_j}^\pm(\omega)}=\frac{2\sqrt{\alpha_j/r_j}}{\mathrm{e}^{\pm i\omega}/r_j+1}=
\frac{2\sqrt{\alpha_jr_j}\,\mathrm{e}^{\mp i\omega}}{\mathrm{e}^{\mp i\omega}r_j+1}=\frac{\mathrm{e}^{\mp i\omega}}{\mathbb{A}_{r_j}^\mp(\omega)}.\end{equation}
We say that $j\in J_r$ (or, alternatively, that $j\in J_{1/r}$), where $j\in\{1,\ldots,n\}$, if $t_j=r_j$ (or $t_j=1/r_j$). Let $c_{1/r}$ denote the cardinality of the set $J_{1/r}\subseteq \{1,\ldots,n\}$.
Then, it follows from \eqref{phi_nn} and \eqref{buka}, that
\begin{multline}\label{ex2}\hat{\phi}_n(\omega)=\frac{2^n\sqrt{\alpha_1\,r_1\ldots\alpha_n\,r_n}\ \hat{B}_n(\omega)\,
\mathrm{e}^{\mp c_{1/r}i\omega}}{\bigl[\prod_{j\in J_r} (\mathrm{e}^{\pm i\omega}r_j+1)\bigr]\bigl[\prod_{\iota\in J_{1/r}} (\mathrm{e}^{\mp i\omega}r_\iota+1)\bigr]}\\=
\beta_n \prod_{j\in J_r}\sum_{l_j=0}^\infty (-r_j\,\mathrm{e}^{\pm i\omega})^{l_j}\ 
\prod_{\iota\in J_{1/r}}\sum_{l_\iota=0}^\infty (-r_\iota\,\mathrm{e}^{\mp i\omega})^{l_\iota}\,\hat{B}_n(\omega)\ \mathrm{e}^{\mp c_{1/r}i\omega},\end{multline} where $\beta_n:=2^n\sqrt{\alpha_1\,r_1\ldots\alpha_n\,r_n}$. By definition of $\mathbb{P}_n(\omega)$ (see \S~2.1) and on the strength of \eqref{Phi1}, the system $\{\phi_n(\cdot -\tau)\}_{\tau\in\mathbb{Z}}$, defined by \eqref{ex1} or by \eqref{ex2}, is an orthonormal basis of $V_0$ generated by $\{B_{n;\,0,\tau}\colon \tau\in\mathbb{Z}\}$. 

In order to construct $\psi_n$, we write, taking into account \eqref{'psi_n}, \eqref{'psi_n'} and \eqref{'phi_n},
\begin{equation*}\label{obraz}\hat{\psi}_{n}(\omega)=\frac{(-1)^{n+1}}{2^{n+1}}\,\mathrm{e}^{-i\omega/2}\left({\mathrm{e}^{i\omega/2}-1}\right)^{n+1}
\frac{\mathscr{A}_{t_1}^\mp(w/2)\ldots \mathscr{A}_{t_n}^\mp(w/2)}{\mathbb{A}_{t_1}^\mp(w)\mathbb{A}_{t_1}^\pm(w/2)\ldots \mathbb{A}_{t_n}^\mp(w)\mathbb{A}_{t_n}^\pm(w/2)}\ \hat{B}_n(\omega/2),\end{equation*} where 
$$\mathrm{e}^{-i\omega/2}\left(\frac{\mathrm{e}^{i\omega/2}-1}{2}\right)^{n+1}=2^{-n-1}\sum_{k=0}^{n+1}\frac{(-1)^k(n+1)!}{k!(n+1-k)!}\mathrm{e}^{(n-k)i\omega/2}.$$ Then, on the strength of \eqref{ryadok} and \eqref{e-perehod},
\begin{multline}\label{psihat}\hat{\psi}_{n}(w)=\frac{\sqrt{\alpha_{1}\,t_{1}\ldots \alpha_{n}\,t
_{n}}}{2\cdot(-1)^{n+1}}(1-\mathrm{e}^{\mp i\omega/2}t_1)\ldots
(1-\mathrm{e}^{\mp i\omega/2}t_n)\ \mathrm{e}^{\pm c_{1/r}i\omega/2}\,\sum_{k=0}^{n+1}\frac{(-1)^k(n+1)!}{k!(n+1-k)!}\mathrm{e}^{(n-k)i\omega/2}\\
\times \prod_{j\in J_r}\sum_{m_j=0}^\infty (-r_j\,\mathrm{e}^{\mp i\omega})^{m_j}\sum_{l_j=0}^\infty (-r_j\,\mathrm{e}^{\pm i\omega/2})^{l_j}\  
\prod_{\iota\in J_{1/r}}r_\iota^2\sum_{m_\iota= 0}^\infty (-r_\iota\,\mathrm{e}^{\pm i\omega})^{m_\iota}\sum_{l_\iota= 0}^\infty (-r_\iota\,\mathrm{e}^{\mp i\omega/2})^{l_\iota}\ \hat{B}_n(\omega/2) \\
=\frac{\sqrt{\alpha_{1}\,t_{1}\ldots \alpha_{n}\,t
_{n}}(r_{1}\ldots r_{n})}{2\cdot(-1)^{n+1}}(1/t_1-\mathrm{e}^{\mp i\omega/2})\ldots
(1/t_n-\mathrm{e}^{\mp i\omega/2})\ \mathrm{e}^{\pm c_{1/r}i\omega/2}\,\sum_{k=0}^{n+1}\frac{(-1)^k(n+1)!}{k!(n+1-k)!}\mathrm{e}^{(n-k)i\omega/2}\\
\times \prod_{j\in J_r}\sum_{m_j=0}^\infty (-r_j\,\mathrm{e}^{\mp i\omega})^{m_j}\sum_{l_j=0}^\infty (-r_j\,\mathrm{e}^{\pm i\omega/2})^{l_j}\  
\prod_{\iota\in J_{1/r}}\sum_{m_\iota= 0}^\infty (-r_\iota\,\mathrm{e}^{\pm i\omega})^{m_\iota}\sum_{l_\iota= 0}^\infty (-r_\iota\,\mathrm{e}^{\mp i\omega/2})^{l_\iota}\ \hat{B}_n(\omega/2) .\end{multline} 
Denote ${\gamma }_{n}:=-{\sqrt{\alpha_{1}\,t_{1}\ldots \alpha_{n}\,t
_{n}}(r_1\ldots r_n)}$. Similarly to the case $n=1$, we shall use symbols $\psi_{t_1,\ldots,t_n}^\pm$ to emphasise the choice of $\mp$ and $t_j$, $j=1,\ldots,n$, in the product
$(1/t_1-\mathrm{e}^{\mp i\omega/2})\ldots
(1/t_n-\mathrm{e}^{\mp i\omega/2})$
in \eqref{psihat}.

Since $\mathscr{F}(B_n(2\cdot\pm k))(\omega)= \frac{1}{2}\mathrm{e}^{\pm ik\omega/2}\hat{B}_n(\omega/2)$, then the pre--image of
$$(1/t_1-\mathrm{e}^{\mp i\omega/2})\ldots
(1/t_n-\mathrm{e}^{\mp i\omega/2})\sum_{k=0}^{n+1}\frac{(-1)^k(n+1)!}{k!(n+1-k)!}\mathrm{e}^{(n-k)i\omega/2}\ \hat{B}_n(\omega/2)$$ is a sum of $2n+2$ translations of $\hat{B}_n(\omega/2)$ with coefficients depending on $n$ and $t_j$, $j=1,\ldots n$. It holds: 
\begin{multline}\label{ti}(1/t_1-\mathrm{e}^{\mp i\omega/2})\ldots
(1/t_n-\mathrm{e}^{\mp i\omega/2})=(-1)^n
\mathrm{e}^{\mp ni\omega/2}+\sum_{k=1}^n(-1)^{n-k}\sum_{\substack{j_1,\ldots,j_k\in\{1,\ldots,n\}\\ j_1\not=\ldots\not= j_k}}1/t_{j_1}\ldots 1/t_{j_k}\,\mathrm{e}^{\mp (n-k)i\omega/2}.\end{multline} 
The part \begin{equation}\label{ess}2^{-n-1}\hat{B}_n(\omega/2)\sum_{k=0}^{n+1}\frac{(-1)^k(n+1)!}{k!(n+1-k)!}\mathrm{e}^{(n-k)i\omega/2}\end{equation} in \eqref{psihat} is the most essential in the definition of $\hat{\psi}_{t_1,\ldots,t_n}^\pm$. Its similarity with the right hand side of the two--scale relation formula \eqref{two-scale} plays an important role in connection between dilations of $B-$splines (see \eqref{dym1}, \eqref{dym2} and \eqref{dymm}). Pre--image of \eqref{ess} has the form
$$2^{-n}\sum_{k=0}^{n+1}\frac{(-1)^k(n+1)!}{k!(n+1-k)!}\ {B}_n(2\cdot +(n-k)).$$ Moreover, up to a constant, it is equal to $(n+1)-$th order derivative of $B_{2n+1}(x+n)$ (see \eqref{diff}):
\begin{equation}\label{ddiff}2^{-n}\sum_{k=0}^{n+1}\frac{(-1)^k(n+1)!}{k!(n+1-k)!}\ {B}_n(2\cdot +(n-k))=2^{-2n-1}B_{2n+1}^{(n+1)}(2x+n).\end{equation}
Below is the expression for $\psi_{r_1,\ldots,r_n}^\pm$ with $\hat{\psi}_{r_1,\ldots,r_n}^\pm$ of the form \eqref{psihat} is given with respect to \eqref{ti}: 
\begin{multline}\label{even}{\gamma_n}^{-1}
\psi_{r_1,\ldots,r_n}^\pm(\cdot)=\sum_{m_1=0}^\infty(-r_1)^{m_1}\sum_{l_1= 0}^\infty (-r_1)^{l_1}\ldots \sum_{m_n= 0}^\infty(-r_n)^{m_n}\sum_{l_n= 0}^\infty(-r_n)^{l_n}\\\Biggl[\sum_{k=0}^{n+1}\frac{(-1)^{k}(n+1)!}{k!(n+1-k)!}\ 
B_n(2\cdot+(n-k)\mp n\mp 2m_1\pm l_1-\ldots \mp 2m_n\pm l_n)\\
-\biggl\{\!\sum_{j_1\in\{1,\ldots,n\}}\frac{1}{r_{j_1}}\!\biggr\}
\sum_{k=0}^{n+1}\frac{(-1)^{k}(n+1)!}{k!(n+1-k)!}\ 
B_n(2\cdot +(n-k)\mp (n-1)\mp 2m_1\pm l_1-\ldots \mp 2m_n\pm l_n)+\\
\biggl\{\!\sum_{\substack{j_1,j_2\in\{1,\ldots,n\}\\ j_1\not=j_2}}\!\!\!\frac{1}{r_{j_1}r_{j_2}}\!\biggr\}
\sum_{k=0}^{n+1}\frac{(-1)^{k}(n+1)!}{k!(n+1-k)!}\ 
B_n(2\cdot +(n-k)\mp (n-2)\mp 2m_1\pm l_1-\ldots \mp 2m_n\pm l_n)+\ldots\\+\biggl\{\!\!\sum_{\substack{j_1,\ldots,j_c\in\{1,\ldots,n\}\\ j_1\not=\ldots\not=j_c}}\!\!\!\frac{(-1)^c}{r_{j_1}\ldots r_{j_c}}\!\biggr\}
\sum_{k=0}^{n+1}\frac{(-1)^{k}(n+1)!}{k!(n+1-k)!}\ 
B_n(2\cdot +(n-k)\mp (n-c)\mp 2m_1\pm l_1-\ldots \mp 2m_n\pm l_n)\\ +\ldots+\frac{(-1)^n}{r_{1}\ldots r_{n}}\ 
\sum_{k=0}^{n+1}\frac{(-1)^{k}(n+1)!}{k!(n+1-k)!}\ 
B_n(2\cdot +(n-k)\mp 2m_1\pm l_1-\ldots \mp 2m_n\pm l_n)\Biggr]
.\end{multline} 
Analogously one can construct $\psi_{t_1,\ldots,t_n}$ with all $2^n$ possible combinations of $t_j=r_j^{\pm 1}$, $j=1,\ldots,n$. For the sake of convenience it is reasonable to center them at $(\cdot 
\mp \frac{c_{1/r}}{2})$. 
To obtain further results of the paper (see  \S\S~3--4) we shall use wavelet systems
\begin{equation}\label{ws}\Bigl\{\phi_n^\pm(\cdot\mp \frac{c_{1/r}}{2}),\,\psi_{t_1,\ldots,t_n}^\pm(\cdot\mp \frac{c_{1/r}}{2})\Bigr\} \quad\textrm{and}\qquad \Bigl\{\phi_n^\pm(\cdot\mp\frac{c_{1/r}}{2}\pm\frac{1}{2}),\,\psi_{t_1,\ldots,t_n}^\pm(\cdot\mp\frac{c_{1/r}}{2}\pm\frac{1}{2})\Bigr\}\end{equation} with $t_j=r_j^{\pm 1}$, $j=1,\ldots,n$. By construction, translations of those $\phi_n^\pm$ and $\psi_n^\pm:=\psi_{t_1,\ldots,t_n}^\pm$ form orthonormal basis in subspaces $V_0$ and $W_0$ of $L^2(\mathbb{R})$ related to the multiresolution analysis generated by $B_n(\cdot)$ and $B_n(\cdot\pm\frac{1}{2})$. Moreover, since $0<r_j<1$, $j=1,\ldots,n$, then 
the system $$\Bigl\{2^{d/2}h_{d\tau}^n\colon d\in\mathbb{N}_{-1},\ \tau\in\mathbb{Z}\Bigr\},$$ where $\mathbb{N}_{-1}:=\mathbb{N}_0\cup\{-1\}$ and $\mathbb{N}_0:=\mathbb{N}\cup\{0\}$, consisting of \begin{equation}\label{system} h_{-1\tau}^n(x):=\sqrt{2}\phi_n^\pm(x-\tau)\qquad\textrm{and}\qquad h^n_{d\tau}(x):=\psi_n^\pm(2^dx-\tau),\end{equation} is an orthonormal basis in $L^2(\mathbb{R})$ with the following properties:
\begin{itemize}
\item[{\rm(i)}] $\phi_n^\pm$ and $\psi_n^\pm$ have classical continuous derivatives up to order $n-1$ inclusively on $\mathbb{R}$;
\item[{\rm(ii)}] the restriction of $\phi_n^\pm$ and $\psi_n^\pm$ to each interval $(k,k+\frac{1}{2})$ with $2k\in\mathbb{Z}$ is a polynomial of degree at most $n$;
\item[{\rm(iii)}] there are constants $c>0$ and $\alpha>0$ such that for $m=0,1,\ldots,n$
$$\left|\frac{d^m}{dx^m}\phi_n^\pm(x)\right|+\left|\frac{d^m}{dx^m}\psi_n^\pm(x)\right|\le c\cdot\mathrm{e}^{-\alpha|x|},\qquad 2x\in\mathbb{R}\setminus\mathbb{Z}; $$
\item[{\rm(iv)}] for $m=0,1,\ldots,n$ $$\int_\mathbb{R} x^m \,\psi_n^\pm(x)\,dx=0.$$
\end{itemize} The (iv) follows, in particular, from \cite[\S~3.7, Proposition 4]{Me} (see also \cite[\S~2.5]{Tr5}).

\smallskip We complete the section by example of $\phi_n^\pm$ and $\psi_n^\pm$ of order $n=2$.

\begin{example}
Let $n=2$. Then $U_4(x)=(2x^4+11x^2+2)/15=(2y^2-15y+15)/15=U^\ast_2(y)$ and 
$\alpha_{1,2}=(15\pm\sqrt{105})/4$. Therefore, $2r_{1}=(13-\sqrt{105})-\sqrt{270-26\sqrt{105}}$~and $2r_{2}=(13+\sqrt{105})-\sqrt{270+26\sqrt{105}}$. By \eqref{phin}, we have with $\beta_2=4\sqrt{\alpha_1\,\alpha_2\,r_1\,r_2}$ (four options, depending on $\pm$, in total):
\begin{equation}\label{phinn}
\phi_2^\pm(x)=\beta_2\sum_{l=0}^\infty (-r_1)^{l}
\sum_{m=0}^\infty (-r_2)^{m}B_2(x\pm l\pm m),\end{equation} where
\begin{equation}B_2(x)=\begin{cases}
\frac{1}{2}x^2, & 0\le x<1,\\
-x^2+3x-\frac{3}{2}, & 1\le x<2,\\
\frac{1}{2}x^2-3x+\frac{9}{2}, & 2\le x<3\\ 0 & \textrm{otherwise}.
\end{cases}\end{equation}
\begin{figure}
\includegraphics[scale=0.65]{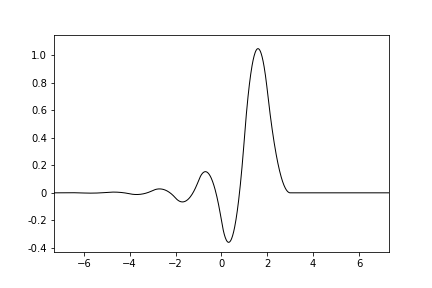}\caption{$\phi_2^+$}
\end{figure} \begin{figure}
\includegraphics[scale=0.65]{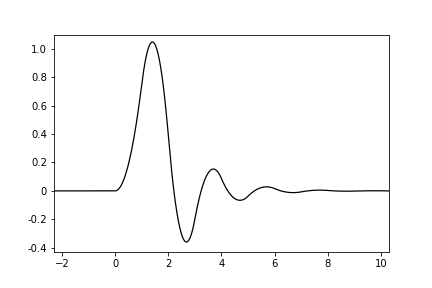}\caption{$\phi_2^-$}
\end{figure}
By \eqref{psihat} and \eqref{ryadok}, say, for $t_j=r_j$, $j=1,2$, (that is in the case $c_{1/r}=0$) \begin{multline*}\hat{\psi}_{r_1,r_2}^\pm(x)=\frac{\gamma_2}{2}(1/r_1-\mathrm{e}^{\mp i\omega/2})(1/r_2-\mathrm{e}^{\mp i\omega/2})
\left[\mathrm{e}^{i\omega}-3
\mathrm{e}^{i\omega/2}+3-\mathrm{e}^{-i\omega/2}\right]\ \hat{B}_2(\omega/2)\\
\times \sum_{k_1=0}^\infty (-r_1\,\mathrm{e}^{\pm i\omega/2})^{k_1}\sum_{k_2=0}^\infty (-r_2\,\mathrm{e}^{\pm i\omega/2})^{k_2}\sum_{m_1= 0}^\infty (-r_1\,\mathrm{e}^{\mp i\omega})^{m_1}\sum_{m_2\ge 0}^\infty (-r_2\,\mathrm{e}^{\mp i\omega})^{m_2}\end{multline*}
with $\gamma_2=-r_1\,r_2\sqrt{\alpha_1\,r_1\,\alpha_2\,r_2}$. Since \begin{multline*}(1/r_1-\mathrm{e}^{- i\omega/2})(1/r_2-\mathrm{e}^{- i\omega/2})
\left[\mathrm{e}^{i\omega}-3
\mathrm{e}^{i\omega/2}+3-\mathrm{e}^{-i\omega/2}\right]\\= \left[\mathrm{e}^{- i\omega}-(1/r_1+1/r_2)\mathrm{e}^{- i\omega/2}+1/(r_1r_2)\right]\left[\mathrm{e}^{i\omega}-3
\mathrm{e}^{i\omega/2}+3-\mathrm{e}^{-i\omega/2}\right]\\
=1/(r_1r_2)\mathrm{e}^{i\omega}-(1/r_1+1/r_2+3/(r_1r_2))\mathrm{e}^{i\omega/2} +(1+3(1/r_1+1/r_2)+3/(r_1r_2)) \\-(3+3(1/r_1+1/r_2)+1/(r_1r_2))\mathrm{e}^{-i\omega/2} +(3+1/r_1+1/r_2)\mathrm{e}^{-i\omega}-\mathrm{e}^{-3i\omega/2},\end{multline*}  \begin{figure}
\includegraphics[scale=0.65]{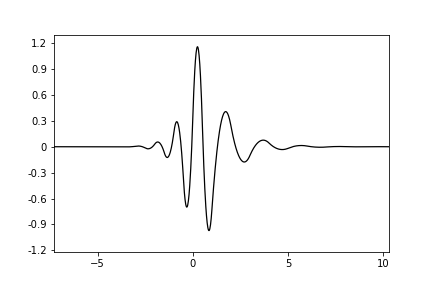}\caption{$\psi_{r_1,r_2}^+$}
\end{figure}
a wavelet $\psi_{r_1,r_2}^+$, related to $\phi_2^\pm$ may have, in particular, the following form:
\begin{multline}{\gamma_2}^{-1}\psi_{r_1,r_2}^+(x)=\\=\sum_{k_1\ge 0}(-r_1)^{k_1}
\sum_{k_2\ge 0}(-r_2)^{k_2}\sum_{m_1\ge 0}(-r_1)^{m_1}
\sum_{m_2\ge 0}(-r_2)^{m_2}\bigl[B_2(2x+k_1+k_2-2m_1-2m_2+2)/(r_1r_2)\\
-(1/r_1+1/r_2+3/(r_1r_2))B_2(2x+k_1+k_2-2m_1-2m_2+1)\\
+(1+3(1/r_1+1/r_2)+3/(r_1r_2))
B_2(2x+k_1+k_2-2m_1-2m_2)\\
 -(3+3(1/r_1+1/r_2)+1/(r_1r_2))B_2(2x+k_1+k_2-2m_1-2m_2-1)\\+(3+1/r_1+1/r_2)
 B_2(2x+k_1+k_2-2m_1-2m_2-2)-B_2(2x+k_1+k_2-2m_1-2m_2-3)
 \bigr].\end{multline}
\end{example}

\section{Localisation property} For a function $F$ on $\mathbb{R}$ and $2m\in\mathbb{N}$ put $$(S_1^{\pm m}F)(x):=F(x)+r_{j_1}\cdot F(x\pm m),\qquad (R_1^{\pm m}F)(x):=F(x)+r_{\iota_1}\cdot F(x\pm m)$$ and define, recursively,
\begin{gather*}(S_\epsilon^{\pm m}\ldots S_1^{\pm m}F)(x):=(S_{\epsilon-1}^{\pm m}\ldots S_1^{\pm m}F)(x)+r_{j_\epsilon}\cdot (S_{\epsilon-1}^{\pm m}\ldots S_1^{\pm m}F)(x\pm m),\qquad 1\le\epsilon\le c_r,\, {j_\epsilon}\in J_r;\\
(R_\upsilon^{\pm m}\ldots R_1^{\pm m}F)(x):=(R_{\upsilon-1}^{\pm m}\ldots R_1^{\pm m}F)(x)+r_{\iota_\upsilon}\cdot (R_{\upsilon-1}^{\pm m}\ldots R_1^{\pm m}F)(x\pm m),\quad 1\le\upsilon\le c_{1/r},\, {\iota_\upsilon}\in J_{1/r}.\end{gather*}
Let $\{\phi_n,\,\psi_n\}$ be wavelet systems \eqref{ws} with $\hat{\phi}_n=\hat{\phi}_n^\pm$ satisfying \eqref{ex2} and $\hat{\psi}_n=\hat{\psi}_{t_1,\ldots,t_n}^\pm$ of the form \eqref{psihat}. For simplicity we assume that $\phi_n$ are centred at $(\cdot \pm{c_{1/r}})$ and $\psi_n$ are centred at $(\cdot \mp\frac{c_{1/r}}{2})$.
~Recall that $c_{1/r}$ denotes cardinality of the set $J_{1/r}$. Similarly, $c_r$ stands for cardinality of the set $J_r$.
\begin{theorem}\label{PhiPsi} Fixed $n\in\mathbb{N}$, put $\beta_{n}:=2^{n}\sqrt{\alpha_{1}\,r
_{1}\ldots \alpha_{n}\,r_{n}}$, ${\gamma }_{n}:=-{\sqrt{\alpha_{1}\,t_{1}\ldots \alpha_{n}\,t
_{n}}(r_1\ldots r_n)}$, $\widetilde{\gamma }_{n}:=\frac{2^n\sqrt{\alpha_{1}\ldots \alpha_{n}}(r_1\ldots r_n)}{(-1)^{n+1}}$ and $\delta_n:=\prod_{j=1}^n(\frac{1}{r_j}-r_j)$.\\ It holds \begin{equation}\label{phi_fint}\Phi_n(x):=\Bigl(S_{c_r}^{\pm 1}\ldots S_1^{\pm 1}\bigl[R_{c_{1/r}}^{\mp 1}\ldots R_1^{\mp 1}\phi_{n}^\pm\bigr]\Bigr)(x)=\beta_n\,B_n(x)\end{equation} and
\begin{multline}
\label{psi_fint}\frac{\delta_{n}\widetilde{\gamma}_{n}}{2^{2n+1}}\ B_{2n+1}^{(n+1)}(2x+n)=\Psi
_{n}(x):
=\\=\frac{1}{\sqrt{r_n}}\Biggm(\ldots\frac{1}{\sqrt{r_4}}\Biggl[\frac{1}{\sqrt{r_3}}\biggl \{\frac{1}{\sqrt{r_2}}\Bigl(\frac{1}{\sqrt{r_1}}\Lambda^{\pm }_{{r_{1}},{r_{2}},{r_{3}},{r_{4}},
\ldots ,{r_{n}}}(x)-\sqrt{r_1}\Lambda^{\pm }_{\frac{1}{r_{1}},{r_{2}},{r_{3}},
{r_{4}},\ldots ,{r_{n}}}(x)\Bigr)\\-\sqrt{r_2}\Bigl(\frac{1}{\sqrt{r_1}}
\Lambda_{{r_{1}}, \frac{1}{r_{2}},{r_{3}},{r_{4}},\ldots ,{r_{n}}}
^{\pm }(x)-\sqrt{r_1}\Lambda^{\pm }_{\frac{1}{r_{1}},\frac{1}{r_{2}},{r_{3}},
{r_{4}},\ldots ,{r_{n}}}(x)\Bigr)\biggr \}
\\
-\sqrt{r_3}
\biggl \{\frac{1}{\sqrt{r_2}}\Bigl(\frac{1}{\sqrt{r_1}}
\Lambda_{{r_{1}}, {r_{2}},\frac{1}{r_{3}},{r_{4}},\ldots ,{r_{n}}}
^{\pm }(x)-\sqrt{r_1}
\Lambda^{\pm }_{\frac{1}{r_{1}},{r_{2}},\frac{1}{r_{3}},
{r_{4}},\ldots ,{r_{n}}}(x)\Bigr)\\-\sqrt{r_2}\Bigl(\frac{1}{\sqrt{r_1}}
\Lambda_{{r_{1}}, \frac{1}{r_{2}},\frac{1}{r_{3}},{r_{4}},\ldots ,
{r_{n}}}^{\pm }(x)-\sqrt{r_1}\Lambda^{\pm }_{\frac{1}{r_{1}},\frac{1}{r_{2}},\frac{1}{r
_{3}},{r_{4}},\ldots ,{r_{n}}}(x)\Bigr)\biggr \}\Biggr]
\\
-\sqrt{r_4}\Biggl[\frac{1}{\sqrt{r_3}}\biggl \{\frac{1}{\sqrt{r_2}}\Bigl(\frac{1}{\sqrt{r_1}}\Lambda^{\pm }_{{r_{1}},{r_{2}},{r_{3}},\frac{1}{r
_{4}},\ldots ,{r_{n}}}(x)-\sqrt{r_1}\Lambda^{\pm }_{\frac{1}{r_{1}},{r_{2}},
{r_{3}},\frac{1}{r_{4}},\ldots ,{r_{n}}}(x)\Bigr)\\-\sqrt{r_2}\Bigl(\frac{1}{\sqrt{r_1}}
\Lambda_{{r_{1}},\frac{1}{r_{2}},{r_{3}},\frac{1}{r_{4}},\ldots ,
{r_{n}}}^{\pm }(x)-\sqrt{r_1}\Lambda^{\pm }_{\frac{1}{r_{1}},\frac{1}{r_{2}},
{r_{3}},\frac{1}{r_{4}},\ldots ,{r_{n}}}(x)\Bigr)\biggr \}
\\
-\sqrt{r_3}
\biggl \{\frac{1}{\sqrt{r_2}}\Bigl(\frac{1}{\sqrt{r_1}}
\Lambda_{{r_{1}}, {r_{2}},\frac{1}{r_{3}},\frac{1}{r_{4}},\ldots ,
{r_{n}}}^{\pm }(x)-\sqrt{r_1}
\Lambda^{\pm }_{\frac{1}{r_{1}},{r_{2}},\frac{1}{r
_{3}},\frac{1}{r_{4}},\ldots ,{r_{n}}}(x)\Bigr)\\-\sqrt{r_2}\Bigl(\frac{1}{\sqrt{r_1}}
\Lambda_{{r_{1}}, \frac{1}{r_{2}},\frac{1}{r_{3}},\frac{1}{r_{4}},
\ldots ,{r_{n}}}^{\pm }(x)-\sqrt{r_1}\Lambda^{\pm }_{\frac{1}{r_{1}},\frac{1}{r
_{2}},\frac{1}{r_{3}},\frac{1}{r_{4}},\ldots ,{r_{n}}}(x)\Bigr)\biggr
\}\Biggr]\ldots\Biggm)-{\sqrt{r_n}}\Biggm(\ldots\Biggm) ,
\end{multline} where $B_{2n+1}^{(n+1)}(\cdot)$ stands for the $(n+1)-$th order derivative of $B_{2n+1}(\cdot)$ and
\begin{equation}\label{lambda}\Lambda_{{t_1},\ldots,{t_n}}^\pm(x):=
\Bigl(S_{c_r}^{\mp 1}S_{c_r}^{\pm 1/2}\ldots S_1^{\mp 1}S_1^{\pm 1/2}\bigl[R_{c_{1/r}}^{\pm 1}R_{c_{1/r}}^{\mp 1/2}\ldots R_1^{\pm 1}R_1^{\mp 1/2}\psi_{t_1,\ldots,t_n}^\pm\bigr]\Bigr)(x).\end{equation}
\end{theorem}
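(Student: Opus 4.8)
The plan is to work entirely on the Fourier transform side, where the operators $S_\epsilon^{\pm m}$ and $R_\upsilon^{\pm m}$ become multiplications by first–degree trigonometric polynomials. Since $\mathscr{F}(F(\cdot\pm m))(\omega)=\mathrm{e}^{\pm im\omega}\hat F(\omega)$, the recursive definitions give, for any $F$ and any admissible index,
\[
\mathscr{F}\bigl(S_{j_\epsilon}^{\pm m}F\bigr)(\omega)=(1+r_{j_\epsilon}\mathrm{e}^{\pm im\omega})\hat F(\omega),\qquad
\mathscr{F}\bigl(R_{\iota_\upsilon}^{\pm m}F\bigr)(\omega)=(1+r_{\iota_\upsilon}\mathrm{e}^{\pm im\omega})\hat F(\omega),
\]
and since all the series in \eqref{ex2} and \eqref{psihat} are geometric with ratios $r_j\in(0,1)$, they converge exponentially and the manipulations below are legitimate. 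For \eqref{phi_fint}: with $\phi_n^\pm$ centred at $(\cdot\pm c_{1/r})$ the exponential $\mathrm{e}^{\mp c_{1/r}i\omega}$ disappears from \eqref{ex2} and $\hat\phi_n^\pm(\omega)$ equals $\beta_n\hat B_n(\omega)$ divided by $\prod_{j\in J_r}(1+r_j\mathrm{e}^{\pm i\omega})\cdot\prod_{\iota\in J_{1/r}}(1+r_\iota\mathrm{e}^{\mp i\omega})$. Applying the $c_{1/r}$ operators $R_1^{\mp1},\dots,R_{c_{1/r}}^{\mp1}$ (whose indices exhaust $J_{1/r}$) restores the factor $\prod_{\iota\in J_{1/r}}(1+r_\iota\mathrm{e}^{\mp i\omega})$, and then $S_1^{\pm1},\dots,S_{c_r}^{\pm1}$ (indices exhausting $J_r$) restore $\prod_{j\in J_r}(1+r_j\mathrm{e}^{\pm i\omega})$; the denominator cancels, $\mathscr{F}(\Phi_n)=\beta_n\hat B_n$, and \eqref{phi_fint} follows by inversion.

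For \eqref{psi_fint} I first read the nested right–hand side as the alternating sum over all $2^n$ patterns $t=(t_1,\dots,t_n)$, $t_j\in\{r_j,1/r_j\}$, of $w(t)\,\Lambda_{t_1,\dots,t_n}^\pm$, where $w(t)=\prod_{j=1}^n w_j(t_j)$ with $w_j(r_j)=r_j^{-1/2}$ and $w_j(1/r_j)=-r_j^{1/2}$. The core computation is that of $\mathscr{F}(\Lambda_{t_1,\dots,t_n}^\pm)$ from \eqref{psihat}: there the geometric series attached to $j\in J_r$ form the factor $\prod_{j\in J_r}[(1+r_j\mathrm{e}^{\mp i\omega})(1+r_j\mathrm{e}^{\pm i\omega/2})]^{-1}$, those attached to $\iota\in J_{1/r}$ the factor $\prod_{\iota\in J_{1/r}}[(1+r_\iota\mathrm{e}^{\pm i\omega})(1+r_\iota\mathrm{e}^{\mp i\omega/2})]^{-1}$, and the $2n$ operators of \eqref{lambda}—the pairs $S_\epsilon^{\mp1}S_\epsilon^{\pm1/2}$ over $J_r$ and $R_\upsilon^{\pm1}R_\upsilon^{\mp1/2}$ over $J_{1/r}$—multiply by exactly the reciprocals of these two factors, while centring $\psi_{t_1,\dots,t_n}^\pm$ at $(\cdot\mp\tfrac{c_{1/r}}{2})$ removes $\mathrm{e}^{\pm c_{1/r}i\omega/2}$. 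What survives is the clean factorisation
\[
\mathscr{F}(\Lambda_{t_1,\dots,t_n}^\pm)(\omega)=\frac{E(\omega)}{2(-1)^{n+1}}\prod_{j=1}^n\sqrt{\alpha_j t_j}\;r_j\,(1/t_j-\mathrm{e}^{\mp i\omega/2}),\qquad
E(\omega):=\hat B_n(\omega/2)\sum_{k=0}^{n+1}\frac{(-1)^k(n+1)!}{k!(n+1-k)!}\,\mathrm{e}^{(n-k)i\omega/2},
\]
where $E$ is the essential part \eqref{ess} up to the factor $2^{n+1}$, is independent of $t$, and even of the sign choice. (For a single $t$ the factor $\prod_j(1/t_j-\mathrm{e}^{\mp i\omega/2})$ expands via \eqref{ti} and exhibits $\Lambda_{t_1,\dots,t_n}^\pm$ as a finite, hence already compactly supported, sum of shifts of $B_{2n+1}^{(n+1)}(2\cdot+n)$; what matters is that the combination below collapses all these shifts.)

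Because $\mathscr{F}(\Lambda_{t_1,\dots,t_n}^\pm)$ factors over $j$, the distributive law gives $\mathscr{F}(\Psi_n)(\omega)=\dfrac{E(\omega)}{2(-1)^{n+1}}\prod_{j=1}^n Q_j(\omega)$ with
\[
Q_j(\omega)=r_j^{-1/2}\sqrt{\alpha_j r_j}\,r_j(1/r_j-\mathrm{e}^{\mp i\omega/2})-r_j^{1/2}\sqrt{\alpha_j/r_j}\,r_j(r_j-\mathrm{e}^{\mp i\omega/2})=\sqrt{\alpha_j}\bigl[(1-r_j\mathrm{e}^{\mp i\omega/2})-(r_j^2-r_j\mathrm{e}^{\mp i\omega/2})\bigr]=\sqrt{\alpha_j}(1-r_j^2).
\]
Thus the frequency–dependent terms cancel in each $Q_j$ and $\prod_jQ_j=\sqrt{\alpha_1\cdots\alpha_n}\,(r_1\cdots r_n)\,\delta_n=\tfrac{(-1)^{n+1}}{2^n}\widetilde\gamma_n\delta_n$, so $\mathscr{F}(\Psi_n)(\omega)=\tfrac{\widetilde\gamma_n\delta_n}{2^{n+1}}E(\omega)$. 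Since by \eqref{ess} and \eqref{ddiff} the inverse transform of $2^{-n-1}E$ is $2^{-2n-1}B_{2n+1}^{(n+1)}(2\cdot+n)$, inversion yields $\Psi_n(x)=\tfrac{\widetilde\gamma_n\delta_n}{2^{2n+1}}B_{2n+1}^{(n+1)}(2x+n)$, which is \eqref{psi_fint}.

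The phase and sign bookkeeping is routine, but the one genuinely delicate step is the clean factorisation of $\mathscr{F}(\Lambda_{t_1,\dots,t_n}^\pm)$: one must check that the $c_{1/r}$–dependent centring, the correlated choices of $\pm$ and $\mp$, and the per–index adjustments implicit in \eqref{e-perehod} together force each of the $2^n$ transforms into the shape "the universal factor $E(\omega)$ times the fully factored polynomial $\prod_j(1/t_j-\mathrm{e}^{\mp i\omega/2})$", with no residual $c_{1/r}$–dependence beyond that polynomial and the explicit constant. It is precisely this uniformity across the $2^n$ choices that makes the combination telescope down to $\delta_n$ and, in particular, makes $\Psi_n$ compactly supported.
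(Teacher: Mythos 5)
Your proposal is correct and follows essentially the same route as the paper: both pass to the Fourier side where the $S$ and $R$ operators act as multiplication by $(1+r_j\mathrm{e}^{\pm i m\omega})$, cancel the geometric-series denominators of \eqref{ex2} and \eqref{psihat} (with the centring removing the residual exponential), and then collapse the weighted combination of the $2^n$ functions $\Lambda^{\pm}_{t_1,\ldots,t_n}$ via the per-index cancellation $\tfrac{1}{\sqrt{r_j}}(\cdot)|_{t_j=r_j}-\sqrt{r_j}(\cdot)|_{t_j=1/r_j}=\sqrt{\alpha_j}(1-r_j^2)$, finishing with \eqref{ddiff}. The only difference is organisational: you apply the distributive law in one step over all $2^n$ patterns, whereas the paper performs the same reduction as a recursive pairing $\mathbb{T}^0\to\mathbb{T}^1\to\cdots\to\mathbb{T}^n$.
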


\begin{proof}
Since $\phi_n^\pm$ is centred at $(\cdot\pm c_{1/r})$ we obtain by \eqref{ex2}, 
\begin{multline}\label{combin}
\mathscr{F}\Bigl(S_{c_r}^{\pm 1}\ldots S_1^{\pm 1}\bigl[R_{c_{1/r}}^{\mp 1}\ldots R_1^{\mp 1}\phi_{n}^\pm\bigr]\Bigr)(\omega)\\=
{\mathscr{F}\Bigl(S_{c_r-1}^{\pm 1}\ldots S_1^{\pm 1}\bigl[R_{c_{1/r}}^{\mp 1}\ldots R_1^{\mp 1}\phi_{n}^\pm\bigr]\Bigr)(\omega)+r_{j_{c_r}}\mathrm{e}^{\pm i\omega}\cdot \mathscr{F}\Bigl(S_{c_r-1}^{\pm 1}\ldots S_1^{\pm 1}\bigl[R_{c_{1/r}}^{\mp 1}\ldots R_1^{\mp 1}\phi_{n}^\pm\bigr]\Bigr)(\omega)}\\=(1+r_{j_{c_r}}\mathrm{e}^{\pm i\omega})
{\mathscr{F}\Bigl(S_{c_r-1}^{\pm 1}\ldots S_1^{\pm 1}\bigl[R_{c_{1/r}}^{\mp 1}\ldots R_1^{\mp 1}\phi_{n}^\pm\bigr]\Bigr)(\omega)}=\ldots=
\bigl[\prod_{\epsilon=1}^{c_r} (\mathrm{e}^{\pm i\omega}r_{j_\epsilon}+1)\bigr]
{\mathscr{F}\Bigl(R_{c_{1/r}}^{\mp 1}\ldots R_1^{\mp 1}\phi_{n}^\pm\Bigr)(\omega)}\\=\bigl[\prod_{\epsilon=1}^{c_r} (\mathrm{e}^{\pm i\omega}r_{j_\epsilon}+1)\bigr]\biggl[
{\mathscr{F}\Bigl(R_{c_{1/r}-1}^{\mp 1}\ldots R_1^{\mp 1}\phi_{n}^\pm\Bigr)(\omega)}+r_{j_{c_{1/r}}}{\mathscr{F}\Bigl(R_{c_{1/r}-1}^{\mp 1}\ldots R_1^{\mp 1}\phi_{n}^\pm\Bigr)(\omega)}\biggr]\\=\bigl[\prod_{\epsilon=1}^{c_r} (\mathrm{e}^{\pm i\omega}r_{j_\epsilon}+1)\bigr](1+r_{j_{c_{1/r}}})\mathscr{F}\Bigl(R_{c_{1/r}-1}^{\mp 1}\ldots R_1^{\mp 1}\phi_{n}^\pm\Bigr)(\omega)\\\ldots=
\bigl[\prod_{\epsilon=1}^{c_r} (\mathrm{e}^{\pm i\omega}r_{j_\epsilon}+1)\bigr]\bigl[\prod_{\upsilon=1}^{c_{1/r}} (\mathrm{e}^{\mp i\omega}r_{\iota_\upsilon}+1)\bigr]
{\mathscr{F}(\phi_{n}^\pm)(\omega)}={\beta_{n}}\mathscr{F}(B_n)(\omega),\end{multline} and based on \eqref{phi_fint} is proven. 

On the strength of definition \eqref{psihat} of $\hat{\psi }^{\pm }
_{n}$, we obtain, similarly to \eqref{combin}, that
\begin{equation*}
\frac{2\cdot(-1)^{n}}{\gamma_{n}}\mathscr{F}\bigl(
\Lambda_{{t_{1}},\ldots ,{t_{n}}}^{\pm }\bigr)(\omega )= (1/t_{1}-
\mathrm{e}^{\mp i\omega /2})\ldots (1/t_{n}-\mathrm{e}^{\mp i\omega /2})
\ \sum_{k=0}^{n+1}\frac{(-1)^{k}(n+1)!}{k!(n+1-k)!}\mathrm{e}^{(n-k)i
\omega /2}\ \hat{B}_n(\omega/2),
\end{equation*}
where $\Lambda_{{t_{1}},\ldots ,{t_{n}}}^{\pm }$ defined by\eqref{lambda}. Recall that $t_{j}=r_{j}^{\pm 1}$, $j=1,\ldots ,n$.

Given $n\ge 1$ and chosen ``$+$'' or ``$-$'' in $
\Lambda_{{t_{1}},\ldots ,{t_{n}}}^{\pm }$ we consider the collection
$\mathbb{T}^{0}$ of $2^{n}$ functions $T^{0}_{{t_{1}},\ldots ,{t_{n}}}(x):=
\Lambda_{{t_{1}},\ldots ,{t_{n}}}^{\pm }(x)$, meaning $2^{n}$ different
combinations of $t_{j}=r^{\pm 1}_{j}$, $j=1,\ldots ,n$, in $
\Lambda_{{t_{1}},\ldots ,{t_{n}}}^{\pm }$. We pair elements of
$\mathbb{T}^{0}$ as follows: $T^{0}_{{t_{1}},\ldots ,{t_{n}}}$ and
$T^{0}_{{t'_{1}},\ldots ,{t'_{n}}}$ from $\mathbb{T}^{0}$ are coupled
if ${t_{j}}=t'_{j}$ for $j=2,\ldots ,n$, while ${t_{1}}=1/t'_{1}$. Each
couple we associate with the function
\begin{equation*}
T^{1}_{t_{2},\ldots , t_{n}}=\frac{1}{\sqrt{r_1}}T^{0}_{{r_{1}},t_{2},\ldots ,t_{n}}-\sqrt{r_1}T
^{0}_{\frac{1}{r_{1}},t_{2},\ldots ,t_{n}}
\end{equation*}
and call $\mathbb{T}^{1}$ the new collection of $2^{n-1}$ functions
$T^{1}_{t_{2},\ldots ,t_{n}}$. If $n=1$, we finish the localisation
process and obtain the function $T^{1}=\frac{1}{\sqrt{r_1}}\Lambda_{r_{1}}^{\pm }-\sqrt{r_1}
\Lambda_{\frac{1}{r_{1}}}^{\pm }$ satisfying
\begin{equation*}
\frac{\hat{T}^{1}(\omega )}{\widetilde{\gamma}_{1}(1/r_{1}-r_{1})}
=\frac{\hat{B}
_{1}(\omega /2)}{4}\sum_{k=0}^{2}\frac{(-1)^{k}2!}{k!(2-k)!}\mathrm{e}
^{(1-k)i\omega /2}=
\frac{\hat{B}_{1}(\omega /2)}{2}\left( \frac{
\mathrm{e}^{i\omega /2}}{2}-1+\frac{ \mathrm{e}^{-i\omega /2}}{2}\right) =:\frac{
\hat{\Psi }_{1}(\omega )}{\delta_{1}\widetilde{\gamma}_{1}},
\end{equation*}
that is
\begin{multline*}
\Psi_{1}(\cdot )=\delta_{1}\widetilde{\gamma}_{1}\left[ \frac{B_{1}(2\cdot +1)}{2}-B
_{1}(2\cdot )+\frac{B_{1}(2\cdot -1)}{2}\right]
\\
=\left[ \psi^{+}_{r_{1}}(\cdot )+r_{1}\psi_{r_{1}}^{+}(\cdot + 1/2)+ r
_{1}\left( \psi_{r_{1}}^{+}(\cdot -1)+r_{1}\psi_{r_{1}}^{+}(\cdot -1/2)\right) \right]
\\
-\left[ \psi^{+}_{1/r_{1}}(\cdot )+r_{1}\psi_{1/r_{1}}^{+}(\cdot - 1/2)+
r_{1}\left( \psi_{1/r_{1}}^{+}(\cdot +1)+r_{1}\psi_{1/r_{1}}^{+}(
\cdot +1/2)\right) \right]
\\
=(S_{1}^{- 1}S_{1}^{+ 1/2}\psi_{r_{1}})(x)-
(R_{1}^{+1}R_{1}^{-1/2}
\psi_{1/r_{1}})(x)
.
\end{multline*}
Combination {\eqref{two-scale}} with substitution $x=y-1/2$ brings, in
particular,
%
\begin{equation}
\label{dym1}\frac{\Psi_{1}(y-1/2)}{\delta_{1}\widetilde{\gamma}_{1}}=
\frac{1}{2}B_{1}(2y)-B
_{1}(2y-1)+\frac{1}{2}B_{1}(2y-2)=B_{1}(y)-2B_{1}(2y-1).
\end{equation}

If $n\ge 2$ then, similarly to the previous step, we pair elements of
$\mathbb{T}^{1}$ by matching two functions $T^{1}_{t_{2},\ldots ,t
_{n}}$ and $T^{1}_{t'_{2},\ldots ,t'_{n}}$ such that $t_{j}=t'_{j}$ for
$j=3,\ldots ,n$, but $t_{2}=1/t'_{2}$. Each couple we associate with the
function
\begin{equation*}
T^{2}_{t_{3},\ldots ,t_{n}}=\frac{1}{\sqrt{r_2}}T^{1}_{r_{2},\ldots ,t_{n}}-\sqrt{r_2}T^{1}_{\frac{1}{r
_{2}},\ldots ,t_{n}}
\end{equation*}
and call $\mathbb{T}^{2}$ the new collection of $2^{n-2}$ functions
$T^{2}_{t_{3},\ldots , t_{n}}$. Again, if $n=2$, we stop the process
with the function $T^{2}$ such that
\begin{equation*}
\frac{\hat{T}^{2}(\omega )}{\delta_{2}\widetilde{\gamma}_{2}}=\frac{\hat{B}_{2}(
\omega /2)}{8}\sum_{k=0}^{3}\frac{3!(-1)^{k}}{k!(3-k)!}\mathrm{e}^{(2-k)i
\omega /2}=
\frac{\hat{B}_{2}(\omega /2)}{2}\left( \frac{\mathrm{e}
^{i\omega }}{4}-\frac{3\mathrm{e}^{i\omega /2}}{4}+ \frac{3}{4}-\frac{
\mathrm{e}^{-i\omega /2}}{4}\right) =:\frac{\hat{\Psi }_{2}(\omega )}{
\delta_{2}\widetilde{\gamma}_{2}},
\end{equation*}
that is
\begin{multline*}
\Psi_{2}(x)=\delta_{2}\widetilde{\gamma}_{2}\Bigl[\frac{B_{2}(2x+2)}{4}-\frac{3B
_{2}(2x+1)}{4}+\frac{3B_{2}(2x)}{4}-\frac{B_{2}(2x-1)}{4}\Bigr]
\\
= \Bigl[\bigl(S_{2}^{\mp 1}S_{2}^{\pm 1/2}S_{1}^{\mp 1} S_{1}^{\pm 1/2}
\psi_{r_{1},r_{2}}^{+}\bigr)(x)-
\bigl(S_{1}^{\mp 1}S_{1}^{\pm 1/2}R
_{1}^{\pm 1} R_{1}^{\mp 1/2}\psi_{1/r_{1},r_{2}}^{+}\bigr)(x)\Bigr]
\\
-\Bigl[\bigl(S_{1}^{\mp 1}S_{1}^{\pm 1/2}R_{1}^{\pm 1} R_{1}^{\mp 1/2}
\psi_{r_{1},1/r_{2}}^{+}\bigr)(x)-
\bigl(R_{2}^{\pm 1}R_{2}^{\mp 1/2}R
_{1}^{\pm 1} R_{1}^{\mp 1/2}\psi_{1/r_{1},1/r_{2}}^{+}\bigr)(x)\Bigr]
.
\end{multline*}
In view of {\eqref{two-scale}},
%
\begin{multline}
\label{dym2}\frac{\Psi_{2}(x)}{\delta_{2}\widetilde{\gamma}_{2}}=\frac{1}{4}\Bigl[B_{2}(2x+2)-3B
_{2}(2x+1)+3B_{2}(2x)-B_{2}(2x-1)\Bigr]=B_{2}(x+1)-\frac{3}{2}B_{2}(2x+1)-
\frac{1}{2}B_{2}(2x-1).
\end{multline}
If $n\ge 3$ we continue the process. At $j$-th step we deal with
functions
\begin{equation*}
T^{j-1}_{t_{j},\ldots ,t_{n}}=\frac{1}{\sqrt{r_{j-1}}}\,T^{j-2}_{r_{j-1},t_{j},\ldots , t_{n}}-\sqrt{r_{j-1}}\,T
^{j-2}_{1/r_{j-1},t_{j},\ldots , t_{n}}
\end{equation*}
from $\mathbb{T}^{j-1}$ and form the new collection $\mathbb{T}^{j}$ of
$2^{n-j}$ elements
%
\begin{equation}
\label{Tj}T^{j}_{t_{j+1},\ldots ,t_{n}}=\frac{1}{\sqrt{r_j}}T^{j-1}_{r_{j},t_{j+1},\ldots ,t_{n}}-\sqrt{r_j}T
^{j-1}_{1/r_{j},t_{j+1},\ldots ,t_{n}}.
\end{equation}
Overall, starting from {\eqref{lambda}} requiring $2n$ steps, and making
exactly $n$ steps of the form {\eqref{Tj}}, that is $3n$ steps in total,
we obtain the localised function $T^{n}$ with
\begin{equation*}
\frac{\hat{T}^{n}(\omega )}{\delta_{n}\widetilde{\gamma}_{n}}=\frac{\hat{B}_{n}(
\omega /2)}{2^{n+1}}\sum_{k=0}^{n+1}\frac{(-1)^{k}(n+1)!}{k!(n+1-k)!}
\mathrm{e}^{(n-k)i\omega /2}=:\frac{\hat{\Psi }_{n}(\omega )}{\delta_{n}\widetilde{\gamma}_{n}}.
\end{equation*}
From here {\eqref{psi_fint}} follows by {\eqref{ddiff}}.\end{proof}
 
\begin{remark} We have ${\rm supp}\,\Psi_n=[-n/2,n/2+1].$ By \eqref{two-scale}, it holds, where $[p]$ stands for integer part of $p$:
\begin{multline}
\label{dymm}\frac{\hat{\Psi }_{n}(\omega )}{\widetilde{\gamma}_{n}\delta_{n}\mathrm{e}^{in
\omega /2}}=
\frac{\hat{B}_{n}(\omega /2)}{2^{n+1}}\sum_{k=0}^{n+1}
\frac{(-1)^{k}(n+1)!}{k!(n+1-k)!}\mathrm{e}^{-ki\omega /2}
\\
=\hat{B}_{n}(\omega )-
\frac{\hat{B}_{n}(\omega /2)}{2^{n}}\sum_{k=0}
^{[n/2]}\frac{(n+1)!}{(2k+1)!(n-2k)!}\mathrm{e}^{-(2k+1)i\omega /2}.
\end{multline} \end{remark}

$\Phi_n$ and $\Psi_n$ realize {\it the localisation property} of Battle--Lemari\'{e} wavelet systems. 
$\Phi_n$ is constructed by integer shifts of $\phi_n$, which generate the same multiresolution analysis in $L^2(\mathbb{R})$. For $\Psi_n$ we group proper shifts of wavelets $\psi_n=\psi_{t_1,\ldots,t_n}^\pm$ from the systems \eqref{ws}. They constitute bases in subspaces $W_0$ related to multiresolution analysis generated by $B_n(\cdot)$ and $B_n(\cdot\pm \frac{1}{2})$. This localisation property is crucial, in particular, for the estimate $\|f\|_{B^{s}_{pq}(\mathbb{R})}^\circledast\lesssim\|f\|_{B^{s}_{pq}(\mathbb{R})}^\ast$ in the proof of Proposition \ref{prop} in \S~4.

\section{Equivalent norms theorem} \subsection{Prerequisites}
\subsubsection{Nikolskii--Besov spaces} Let $s\in\mathbb{R}$ and $0<p,q\le\infty$. For the definition of Nikolskii--Besov spaces $B_{pq}^s(\mathbb{R})$ (or Besov type spaces, in other terminology, which is more commonly used) we refer to \cite{Tr1, Tr2}. 
If, in addition, $s>\max\{0,1/p-1\}$ then one can define $B_{pq}^s(\mathbb{R})$ as follows.
Let $L^p(\mathbb{R})$ with $0<p\le\infty$ be the set of all Lebesgue measurable functions $f$ on $\mathbb{R}$, quasi--normed by $\|f\|_{L^p(\mathbb{R})}:=\left(\int_{\mathbb{R}}|f(x)|^p\,dx\right)^{1/p}$ with the obvious modification for $p=\infty$.
For $M\in\mathbb{N}$ and $f\in L^p(\mathbb{R})$ put (see e.g. \cite[Remark 2.6]{KLSS}) $(\Delta_h^1f)(\cdot):=f(\cdot+h)-f(\cdot)$, $(\Delta_h^Mf):=\Delta_h^1(\Delta_h^{M-1}f)$ and $$\omega_M(f,t)_p:=\sup_{|h|<t}\|\Delta_h^M f\|_{L^p(\mathbb{R})},\qquad t>0.$$ If $\max\{0,1/p-1\}<s<M$ and $1\le p,q\le\infty$ then $f\in B_{pq}^s(\mathbb{R})$ if and only if $f\in L^p(\mathbb{R})$ and $$\left(\int_0^1\left[t^{-s}\omega_M(f,t)_p\right]^q\frac{dt}{t}\right)^{1/q}<\infty.$$ The theory  and properties of Nikolskii--Besov spaces $B_{pq}^s(\mathbb{R})$ may be found in \cite{EdTr, HTr1, HTr1',HTr2,ST}.

\subsubsection{Sequence spaces} Let $0<p\le\infty$, $0<q\le\infty$ and $s\in\mathbb{R}$. The sequence space $b_{pq}^s$ consists of all sequences
$$\mu=\{\mu_{d\tau}\in\mathbb{C}\colon d\in\mathbb{N}_{-1},\, \tau\in\mathbb{Z}\}$$  such that the quasi--norm
\begin{equation}\label{seqnorm}\|\mu\|_{b_{pq}^s}:=\Biggl(\sum_{d=-1}^\infty \biggl(\sum_{\tau\in\mathbb{Z}} |\mu_{d\tau}|^p\biggr)^{\frac{q}{p}}\Biggr)^{\frac{1}{q}}<\infty\end{equation} (wth the usual modification if $p=\infty$ and/or $q=\infty$) is finite. Sequence spaces of the type $b_{pq}^s$ were introduced in \cite{24,25} in connection with atomic decomposition of the spaces $B_{pq}^s(\mathbb{R})$. 

\subsubsection{Spline bases in $B_{pq}^s(\mathbb{R})$} Let $\mathscr{S}(\mathbb{R})$ be the Schwartz space of all complex--valued rapidly decreasing, infinitely differentiable functions on $\mathbb{R}$, and let $\mathscr{S}'(\mathbb{R})$ denote its dual space of tempered distributions.
\begin{proposition}\cite{Tr5}\label{prop4} Let $n\in\mathbb{N}$ and let $$\Bigl\{h_{d\tau}^n\colon d\in\mathbb{N}_{-1},\ \tau\in\mathbb{Z}\Bigr\}$$ be an ($L^\infty$--normalised) orthogonal spline basis in $L^2(\mathbb{R})$ according to \eqref{system}.

Let $0<p\le\infty$, $0<q\le\infty$ and $$\max\biggl\{\frac{1}{p},1\biggr\}-1-n<s<n+\min\biggl\{\frac{1}{p},1\biggr\}.$$ Let $f\in\mathscr{S}'(\mathbb{R})$. Then $f\in B_{pq}^{s}(\mathbb{R})$ if and only if it can be represented as
\begin{equation}\label{function}
f=\sum_{d\in\mathbb{N}_{-1}}\sum_{\tau\in\mathbb{Z}}\mu_{d\tau}2^{-d(s-\frac{1}{p})}h_{d\tau}^n,\qquad \mu\in b_{pq}^s,
\end{equation} unconditional convergence being in $\mathscr{S}'(\mathbb{R})$ and locally in any space $B_{pq}^\sigma(\mathbb{R})$ with $\sigma<s$. The representation \eqref{function} is unique, $$\mu_{d\tau}=\mu_{d\tau}(f)=2^{d(s-\frac{1}{p}+1)}\int_{\mathbb{R}} f(x) h_{d\tau}^n(x)\,dx,\qquad d\in\mathbb{N}_{-1},\ \tau\in\mathbb{Z},$$ and $$J\colon f\mapsto \mu(f)$$ is an isomorphic map of $B_{pq}^s(\mathbb{R})$ onto $b_{pq}^s$. If, in addition, $p<\infty$, $q<\infty$ then $$\Bigl\{2^{-d(s-\frac{1}{p})}h_{d\tau}^n\colon d\in\mathbb{N}_{-1},\ \tau\in\mathbb{Z}\Bigr\}$$ is an unconditional (normalised) basis in $B_{pq}^s(\mathbb{R})$.
\end{proposition}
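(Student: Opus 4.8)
The plan is to deduce the two--sided estimate $\|f\|_{B^{s}_{pq}(\mathbb{R})}\approx\|\mu(f)\|_{b^{s}_{pq}}$ from the theory of atomic and molecular decompositions of Nikolskii--Besov spaces (Frazier--Jawerth, Triebel), feeding in only the structural facts recorded in \S~2 for the building blocks in \eqref{system}: $\phi^\pm_n,\psi^\pm_n\in C^{n-1}(\mathbb{R})$, piecewise polynomial of degree $\le n$ on the half--integer intervals, with derivatives up to order $n$ decaying exponentially, and with $\int_{\mathbb{R}}x^m\psi^\pm_n(x)\,dx=0$ for $m=0,\ldots,n$. The key observation is that, up to exponentially small (rather than vanishing) tails, each $h^n_{d\tau}=\psi^\pm_n(2^d\cdot-\tau)$, $d\ge0$, is a smooth molecule concentrated at scale $2^{-d}$ about $2^{-d}\tau$ carrying $n+1$ vanishing moments, while each $h^n_{-1\tau}=\sqrt2\,\phi^\pm_n(\cdot-\tau)$ is a scaling--function--type molecule; the admissible window $\max\{1/p,1\}-1-n<s<n+\min\{1/p,1\}$ is precisely what these molecular parameters tolerate.

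For the implication $f\in B^{s}_{pq}(\mathbb{R})\ \Rightarrow\ \mu(f)\in b^{s}_{pq}$ with $\|\mu(f)\|_{b^{s}_{pq}}\lesssim\|f\|_{B^{s}_{pq}(\mathbb{R})}$, I would split $f=\sum_{j\ge0}f_j$ into its Littlewood--Paley pieces and bound $\mu_{d\tau}(f)=2^{d(s-1/p+1)}\langle f,h^n_{d\tau}\rangle$ piece by piece: when $j\le d$ the vanishing moments of $\psi^\pm_n$ annihilate the Taylor polynomial of $f_j$, and when $j\ge d$ the smoothness and exponential decay of $h^n_{d\tau}$ control the pairing, producing $|\langle f,h^n_{d\tau}\rangle|\lesssim 2^{-d}\sum_{j\ge0}2^{-|j-d|N}M_{d\tau}(f_j)$ for any prescribed $N$, where $M_{d\tau}(f_j)$ denotes a local mean (or Peetre maximal value) of $|f_j|$ near $2^{-d}\tau$. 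Taking the $\ell^p$ norm in $\tau$, then the $2^{ds}$--weighted $\ell^q$ norm in $d$, and summing the geometric series in $|j-d|$ --- which converges once $N$ exceeds the thresholds set by $s$, $p$ and $n$, this being where both endpoints of the $s$--range enter --- gives the inequality.

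Conversely, if $\mu\in b^{s}_{pq}$ then every normalised block $2^{-d(s-1/p)}h^n_{d\tau}$ is a fixed multiple of an $(s,p)$--atom (molecule) for $B^{s}_{pq}(\mathbb{R})$: right normalisation, concentration at scale $2^{-d}$ about $2^{-d}\tau$ with exponential tails, and, for $d\ge0$, $n+1$ vanishing moments. The atomic (molecular) decomposition theorem for Besov spaces then yields unconditional convergence of $f:=\sum_{d,\tau}\mu_{d\tau}2^{-d(s-1/p)}h^n_{d\tau}$ in $\mathscr{S}'(\mathbb{R})$ --- and locally in $B^{\sigma}_{pq}(\mathbb{R})$ for $\sigma<s$ --- together with $\|f\|_{B^{s}_{pq}(\mathbb{R})}\lesssim\|\mu\|_{b^{s}_{pq}}$. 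Since $\{2^{d/2}h^n_{d\tau}\}$ is orthonormal in $L^2(\mathbb{R})$, pairing the representation with $h^n_{d\tau}$ forces $\mu_{d\tau}=\mu_{d\tau}(f)=2^{d(s-1/p+1)}\int_{\mathbb{R}}f\,h^n_{d\tau}$, which proves uniqueness and identifies the synthesis map as the inverse of $J$; with the two inequalities this makes $J\colon B^{s}_{pq}(\mathbb{R})\to b^{s}_{pq}$ an isomorphism. Finally, for $p,q<\infty$ the finitely supported sequences are dense in $b^{s}_{pq}$, so transporting density through $J^{-1}$ shows the finite linear combinations of the $2^{-d(s-1/p)}h^n_{d\tau}$ are dense in $B^{s}_{pq}(\mathbb{R})$, and the lattice (hence unconditional) structure of $b^{s}_{pq}$ passes along $J$ to the basis.

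The principal obstacle is the molecular bookkeeping outside the classical setting: $\phi^\pm_n$ and $\psi^\pm_n$ are not compactly supported --- only exponentially localised --- and only $C^{n-1}$, so one must run the Frazier--Jawerth/Triebel estimates with molecules rather than compactly supported atoms and verify throughout that the summability thresholds coincide with the prescribed $s$--interval: the lower endpoint dictated by the $n+1$ vanishing moments, the upper endpoint by the $C^{n-1}$ regularity together with the degree--$n$ piecewise--polynomial structure. This is precisely the material worked out in \cite{Tr5}, to which the statement defers.
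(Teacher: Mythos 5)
Your outline is essentially the route of the proof the paper relies on: the paper does not prove this proposition itself but cites \cite[Theorems 2.46, 2.49]{Tr5}, where the result for spline (Battle--Lemari\'{e} type) bases is obtained precisely through the local means/molecular machinery you describe --- exponential localisation, $C^{n-1}$ smoothness and $n+1$ vanishing moments feeding into the atomic/molecular decomposition of $B_{pq}^s(\mathbb{R})$, with the stated $s$--interval emerging from those parameters. So your proposal is correct in approach and matches the cited source; no genuinely different argument or gap to report.
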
  A proof of this proposition may be found in \cite[Theorems 2.46, 2.49]{Tr5}.

\subsection{The result} It follows from Proposition \ref{prop4} and \eqref{seqnorm} that one can use  
\begin{equation}\label{seq bpq}
\|f\|_{B^{s}_{pq}(\mathbb{R})}^\ast:=\left(\sum_{\tau\in\mathbb{Z}}\left|\langle f,h^n_{-1,\tau}\rangle\right|^p \right)^{\frac{1}{p}}+\left(\sum_{d=0}^\infty 2^{d(s-\frac{1}{p}+1)q}\left(\sum_{\tau\in\mathbb{Z}}\left| \langle f,h^n_{d\tau}\rangle\right|^p\right)^{\frac{q}{p}}\right)^{\frac{1}{q}}
\end{equation} (with usual modifications for $p=\infty$ and $q=\infty$) as an equivalent characterization of the norm in $B_{pq}^s(\mathbb{R})$, when $0<p\le\infty$, $0<q\le\infty$ and $\max\left\{1/p,1\right\}-1-n<s<n+\min\left\{1/p,1\right\}$. In this part of the paper we establish an equivalent characteristic for \eqref{seq bpq}. 
\begin{proposition}\label{prop} Let $n\in\mathbb{N}$ and let $$\Bigl\{h_{d\tau}^n\colon d\in\mathbb{N}_{-1},\ \tau\in\mathbb{Z}\Bigr\}$$ be an ($L^\infty$--normalised) orthogonal spline basis in $L^2(\mathbb{R})$ according to \eqref{system}.

Let $0<p\le\infty$, $0<q\le\infty$ and $$\max\biggl\{\frac{1}{p},1\biggr\}-1-n<s<n+\min\biggl\{\frac{1}{p},1\biggr\}.$$ Then a distribution $f\in\mathscr{S}'(\mathbb{R})$ belongs to $B_{pq}^{s}(\mathbb{R})$ if and only if \begin{equation}\label{seq}
\|f\|_{B^{s}_{pq}(\mathbb{R})}^\circledast:=\left(\sum_{\tau\in\mathbb{Z}}\left|\langle f,B_{n;\,0,\tau}\rangle\right|^p \right)^{\frac{1}{p}}+\left(\sum_{d=0}^\infty 2^{d(s-\frac{1}{p}+1)q}\left(\sum_{\tau\in\mathbb{Z}}\left| \langle f,B_{2n+1;\,d+1,\tau}^{(n+1)}\rangle\right|^p\right)^{\frac{q}{p}}\right)^{\frac{1}{q}}<\infty
\end{equation} (with usual modifications for $p=\infty$ and $q=\infty$). Furthermore, $\|f\|_{B^{s}_{pq}(\mathbb{R})}^\circledast$ may be used as an equivalent norm on $B_{pq}^s(\mathbb{R})$.
\end{proposition}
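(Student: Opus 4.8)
The plan is to deduce Proposition~\ref{prop} from Proposition~\ref{prop4} by showing that the sequence $\|f\|_{B^s_{pq}(\mathbb{R})}^\circledast$ is equivalent to $\|f\|_{B^s_{pq}(\mathbb{R})}^\ast$ (defined by \eqref{seq bpq}), which by the remarks following \eqref{seq bpq} is an equivalent norm on $B^s_{pq}(\mathbb{R})$ in the stated parameter range. The bridge is Theorem~\ref{PhiPsi}: the localisation identities \eqref{phi_fint} and \eqref{psi_fint} express $\beta_n B_n$ and (up to the nonzero constant $\delta_n\widetilde{\gamma}_n/2^{2n+1}$) $B_{2n+1}^{(n+1)}(2\cdot+n)$ as \emph{finite} linear combinations of integer/half-integer translates of the orthonormal system functions $\phi_n^\pm$, $\psi_n^\pm=\psi_{t_1,\ldots,t_n}^\pm$, and conversely each $\phi_n^\pm$, $\psi_n^\pm$ is a rapidly (indeed geometrically, by \eqref{ex2}, \eqref{psihat}, since $0<r_j<1$) decaying combination of integer/half-integer translates of $B_n$ and $B_{2n+1}^{(n+1)}$. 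Passing to dilation level $d$ and translate $\tau$, pairing against $f$, and using $\langle f, B_{2n+1;\,d+1,\tau}^{(n+1)}\rangle$ in place of $\langle f, h^n_{d\tau}\rangle$, one gets that each coefficient of one system is a finite (resp. absolutely summable) linear combination of shifted coefficients of the other, with coefficients independent of $d$.

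Concretely, first I would record the two inclusions at the level of inner products. From \eqref{phi_fint}, writing $S$ and $R$ operators out, $\langle f, B_{n;\,0,\tau}\rangle$ is a finite linear combination (with fixed coefficients built from the $r_j$) of the numbers $\langle f, \phi_n^\pm(\cdot-\tau')\rangle = \langle f, h^n_{-1,\tau'}\rangle$ up to normalisation, for $\tau'$ ranging over a bounded shift of $\tau$; and conversely, by \eqref{ex2}, $\langle f, h^n_{-1,\tau}\rangle$ equals an absolutely summable (geometric-tailed) combination of $\langle f, B_{n;\,0,\tau'}\rangle$. Likewise, for $d\ge 0$, using \eqref{psi_fint} together with \eqref{lambda} and the scaling $B_{2n+1;\,d+1,\tau}^{(n+1)}(x)=B_{2n+1}^{(n+1)}(2^{d+1}x-\tau)$ matched to $B_{2n+1}^{(n+1)}(2(2^dx)+n)$, the coefficient $\langle f, B_{2n+1;\,d+1,\tau}^{(n+1)}\rangle$ is a finite fixed-coefficient combination of the wavelet coefficients $\langle f, \psi_n^\pm(2^d\cdot-\tau')\rangle=\langle f,h^n_{d\tau'}\rangle$, and conversely each $\langle f,h^n_{d\tau}\rangle$ is a geometrically summable combination of the $\langle f, B_{2n+1;\,d+1,\tau'}^{(n+1)}\rangle$. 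The crucial point is that all shift-supports and all summable tails are \emph{uniform in $d$}, because the $S^{\pm 1}$, $S^{\pm 1/2}$, $R^{\pm 1}$, $R^{\pm 1/2}$ act by shifts of size $O(1)$ at the level-$d$ scale and the series coefficients $(-r_j)^{l}$ are independent of $d$.

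Next I would run the standard convolution-type estimate: if $a_{d\tau}=\sum_{k} c_k\, b_{d,\tau+k}$ with $\sum_k|c_k|<\infty$ (finite sum in one direction), then for $0<p\le\infty$, $\ell^p$ in $\tau$ is controlled, $\big(\sum_\tau|a_{d\tau}|^p\big)^{1/p}\lesssim \big(\sum_\tau|b_{d\tau}|^p\big)^{1/p}$ when $p\ge1$, and for $0<p<1$ one uses the $p$-triangle inequality $\|\cdot\|_{\ell^p}^p$-subadditivity, which still gives a constant depending only on $\sum_k|c_k|^{\min\{p,1\}}$ — again uniform in $d$. Summing the weighted $\ell^q$ norm over $d$ (with weight $2^{d(s-1/p+1)q}$, identical on both sides since the dilation index is the same) then yields $\|f\|^\circledast\lesssim\|f\|^\ast$ and $\|f\|^\ast\lesssim\|f\|^\circledast$, hence equivalence; the usual $p=\infty$ and/or $q=\infty$ modifications are routine. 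Combining with the fact that $\|\cdot\|^\ast$ characterises $B^s_{pq}(\mathbb{R})$ in the stated range finishes the proof.

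The main obstacle is bookkeeping rather than analysis: one must verify that the shift-patterns in \eqref{phi_fint}, \eqref{psi_fint}, \eqref{lambda} and in \eqref{ex2}, \eqref{psihat} really do line up so that the coefficient maps are \emph{uniformly bounded} convolution operators on $\ell^p(\mathbb{Z})$ for every $d$ simultaneously — in particular that the half-integer shifts $\pm1/2$ (which at level $d$ become $\pm 2^{-d-1}$-type shifts in $x$, i.e. integer shifts of $\tau$ at the finer scale $d+1$ used for $B_{2n+1}^{(n+1)}$) are absorbed correctly into the index $\tau$ of $B_{2n+1;\,d+1,\tau}^{(n+1)}$, and that the geometric tails from the $(-r_j)^{l}$ sums do not interact badly with the $2^{d(\cdot)q}$ weights. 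Once the index dictionary between $\{h^n_{d\tau}\}$ and $\{B_{n;\,0,\tau},\,B_{2n+1;\,d+1,\tau}^{(n+1)}\}$ is pinned down from Theorem~\ref{PhiPsi}, the norm equivalence is immediate.
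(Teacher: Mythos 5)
Your proposal is correct and follows essentially the same route as the paper: two-sided $\ell^p$ comparisons at each level $d$, with constants uniform in $d$, obtained from the geometric expansions \eqref{phin}, \eqref{even} (together with \eqref{ess}, \eqref{ddiff}) in one direction and from the compactly supported combinations $\Phi_n$, $\Psi_n$ of Theorem~\ref{PhiPsi} in the other, followed by summation in $q$ and the $\|\cdot\|^\ast$-characterization of Proposition~\ref{prop4}. The paper merely carries out the convolution-type estimates explicitly (splitting $p\le 1$ and $p>1$ via the $p$-triangle and H\"older inequalities) and disposes of the half-integer-shift bookkeeping you flag by noting that $\|f\|^\ast_{B^s_{pq}(\mathbb{R})}$ computed with the shifted systems $\{\phi_n(\cdot\pm\frac12),\psi_n^\pm(\cdot\pm\frac12)\}$ is equivalent to that with $\{\phi_n,\psi_n^\pm\}$.
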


\begin{proof} 
Let $\phi_n=\phi_{n}^+$ with $c_r=n$. Argumentation for other cases of $\phi_n$ is analogous.
We write, by 
\eqref{phin},
\begin{multline*}
\langle f,\phi_n^+(\cdot -\tau)\rangle=\beta_n\sum_{l_1=0}^\infty (-r_1)^{l_1}\ldots \sum_{l_n=0}^\infty (-r_n)^{l_n}\langle f,B_n(\cdot -\tau+ l_1\ldots + l_n)\rangle\\
=\beta_n\sum_{l_1\le 0} (-r_1)^{-l_1}\ldots \sum_{l_n\le 0} (-r_n)^{-l_n}\langle f,B_n(\cdot -\tau- l_1\ldots - l_n)\rangle.
\end{multline*} 
Then, if $p\le 1$,
\begin{multline}\label{hvost<1}
\sum_{\tau\in\mathbb{Z}}\left|\langle f,\phi_n^+(\cdot -\tau)\rangle\right|^p\le\beta_n^p\sum_{\tau\in\mathbb{Z}}
\sum_{l_1\le 0} r_1^{-l_1p}\ldots \sum_{l_n\le 0} r_n^{-l_np}|\langle f,B_n(\cdot -\tau- l_1\ldots -l_n)\rangle|^p\\=\beta_n^p\sum_{\tau\in\mathbb{Z}}\sum_{l_1\le 0} r_1^{-l_1p}\ldots \sum_{l_{n-1}\le 0} r_{n-1}^{-l_{n-1}p}
\sum_{m\le \tau+l_1+\ldots +l_{n-1}} r_n^{(\tau-m+l_1\ldots +\l_{n-1})p}
|\langle f,B_n(\cdot -m)\rangle|^p\\=\beta_n^p\sum_{m\in\mathbb{Z}}
|\langle f,B_n(\cdot -m)\rangle|^p
\sum_{l_1\le 0} r_1^{-l_1p}\ldots \sum_{l_{n-1}\le 0} r_{n-1}^{-l_{n-1}p}\sum_{\tau\ge m-l_1-\ldots -l_{n-1}} r_n^{(\tau-m+l_1\ldots +\l_{n-1})p}\\\le\beta_n^p
\prod_{j=1}^n\frac{1}{1-r_j^p}\sum_{m\in\mathbb{Z}}
|\langle f,B_n(\cdot -m)\rangle|^p
.\end{multline} Analogously, for $p>1$ we obtain using $n-$times H\"{o}lder's inequality with $p$ and $p'=\frac{p}{p-1}$, and representing $r_j^{l_j}=r_j^{l_j/p+l_j/p'}$ for each $j=1,\ldots,n$, that
\begin{equation}\label{hvost>1}
\biggl(\sum_{k\in\mathbb{Z}}\left|\langle f,\phi_n^+(\cdot -k)\rangle\right|^p\biggr)^{\frac{1}{p}}\le\beta_n
\prod_{j=1}^n\frac{1}{1-r_j}\biggl(\sum_{m\in\mathbb{Z}}
|\langle f,B_n(\cdot -m)\rangle|^p\biggr)^{\frac{1}{p}}
.\end{equation} On the other side, according to the construction of $\Phi_n$ (see \eqref{phi_fint}) with $c_r=n$ and $c_{1/r}=0$ in our case,
\begin{multline*}
\beta_n\biggl(\sum_{\tau\in\mathbb{Z}}
|\langle f,B_n(\cdot -\tau)\rangle|^p\biggr)^{\frac{1}{p}}=
\biggl(\sum_{\tau\in\mathbb{Z}}
|\langle f,\Phi_n(\cdot -\tau)\rangle|^p\biggr)^{\frac{1}{p}}\\\le
 \biggl(\sum_{\tau\in\mathbb{Z}}
|\langle f,\phi_n^+(\cdot -\tau)\rangle|^p\biggr)^{\frac{1}{p}}
\begin{cases}\Bigl(\prod_{j=1}^n(1+r_j^p)\Bigr)^{\frac{1}{p}}, & p\le 1\\
\prod_{j=1}^n(1+r_j), & p>1\end{cases}.
\end{multline*} Thus, \begin{multline}\label{eqvPhi}
\beta_n C_1\biggl(\sum_{\tau\in\mathbb{Z}}
|\langle f,B_n(\cdot -\tau)\rangle|^p\biggr)^{\frac{1}{p}}
\le
\biggl(\sum_{\tau\in\mathbb{Z}}
|\langle f,\phi_n^+(\cdot -\tau)\rangle|^p\biggr)^{\frac{1}{p}}\\
\le\beta_n\biggl(\sum_{\tau\in\mathbb{Z}}
|\langle f,B_n(\cdot -\tau)\rangle|^p\biggr)^{\frac{1}{p}}\begin{cases}
\Bigl(\prod_{j=1}^n\frac{1}{1-r_j^p}\Bigr)^{\frac{1}{p}}, & p\le 1\\
\prod_{j=1}^n\frac{1}{1-r_j}, & p>1\end{cases},
\end{multline} where $C_1^{-1}:=\Bigl(\prod_{j=1}^n(1+r_j^p)\Bigr)^{\frac{1}{p}}$ for $p\le 1$ and $C_1^{-1}:=\prod_{j=1}^n(1+r_j)$ if $p>1$. 

For deriving an estimate similar to \eqref{hvost<1} and \eqref{hvost>1}, but with, say, $\psi_{r_1,\ldots,r_n}^+$ this time, we write, by \eqref{even}, taking into account \eqref{psihat}, \eqref{ti} and \eqref{ess}:
\begin{multline*}{\gamma_n} ^{-1}
\psi_n^+(2^d\cdot -\tau)=\sum_{m_1\le 0}(-r_1)^{-m_1}\sum_{l_1\le 0} (-r_1)^{-l_1}\ldots \sum_{m_n\le 0}(-r_n)^{-m_n}\sum_{l_n\le 0}(-r_n)^{-l_n}\\\Biggl[\sum_{k=0}^{n+1}\frac{(-1)^{k}(n+1)!}{k!(n+1-k)!}\ 
B_n(2^{d+1}\cdot -2\tau-k-2m_1+ l_1-\ldots -2m_n+ l_n)\\
-\biggl\{\!\sum_{j_1\in\{1,\ldots,n\}}\frac{1}{r_{j_1}}\!\biggr\}
\sum_{k=0}^{n+1}\frac{(-1)^{k}(n+1)!}{k!(n+1-k)!}\ 
B_n(2^{d+1}\cdot -2\tau-k+1-2m_1+ l_1-\ldots -2m_n+ l_n)\\
+\!\biggl\{\!\sum_{\substack{j_1,j_2\in\{1,\ldots,n\}\\ j_1\not=j_2}}\!\!\!\frac{1}{r_{j_1}r_{j_2}}\!\biggr\}
\sum_{k=0}^{n+1}\frac{(-1)^{k}(n+1)!}{k!(n+1-k)!}\ 
B_n(2^{d+1}\cdot -2\tau-k+2-2m_1+ l_1-\ldots -2m_n+ l_n)\\+\ldots +\biggl\{\!\!\sum_{\substack{j_1,\ldots,j_c\in\{1,\ldots,n\}\\ j_1\not=\ldots\not=j_c}}\!\!\!\frac{(-1)^c}{r_{j_1}\ldots r_{j_c}}\!\biggr\}
\sum_{k=0}^{n+1}\frac{(-1)^{k}(n+1)!}{k!(n+1-k)!}\ 
B_n(2^{d+1}\cdot -2\tau-k+c-2m_1+ l_1\ldots -2m_n+ l_n)\\\ldots +\frac{(-1)^n}{r_{1}\ldots r_{n}}\ 
\sum_{k=0}^{n+1}\frac{(-1)^{k}(n+1)!}{k!(n+1-k)!}\ 
B_n(2^{d+1}\cdot -2\tau-k+n-2m_1+l_1-\ldots -2m_n+ l_n)\Biggr]
.\end{multline*}
Denote $\mathbb{B}_n(2^{d+1}\cdot-2\tau-2m_1+l_1-\ldots-2m_n+l_n)$ the quantity in the square brackets. Similarly to \eqref{hvost<1},
\begin{multline*}
{|\gamma_n|^{-1}}\sum_{\tau\in\mathbb{Z}}|\langle f,\psi_{r_1,\ldots,r_n}^+(2^d\cdot-\tau)\rangle|^p\\
\le\sum_{\tau\in\mathbb{Z}}\sum_{l_1\le 0} r_1^{-l_1p}\!\!\sum_{m_1\ge 0}r_1^{m_1p}\ldots\sum_{l_n\le 0}r_n^{-l_np} \sum_{m_n\ge 0}r_n^{m_np}|\langle f, \mathbb{B}_n(2^{d+1}\cdot-2\tau-2m_1-l_1-\ldots-2m_n-l_n)\rangle|^p\\=
\sum_{\tau\in\mathbb{Z}}\!\sum_{m_1\ge 0}r_1^{m_1p}\!\!\ldots\!\!\sum_{m_n\ge 0}r_n^{m_np}\!\!
\sum_{\varkappa_1\le 2m_1} r_1^{(2m_1-\varkappa_1)p}\ldots\!\!\!\sum_{\varkappa_n\le 2m_n} r_1^{(2m_n-\varkappa_n)_p} |\langle f, \mathbb{B}_n(2^{d+1}\cdot-2\tau-\varkappa_1-\ldots-\varkappa_n)\rangle|^p\\=
\sum_{\tau\in\mathbb{Z}}\sum_{m_1\ge 0}r_1^{m_1p}\ldots\sum_{m_n\ge 0}r_n^{m_np}
\sum_{\varkappa_1\le 2m_1} r_1^{(2m_1-\varkappa_1)p}\ldots\sum_{\varkappa_{n-1}\le 2m_{n-1}} r_1^{(2m_{n-1}-\varkappa_{n-1})p}\\\sum_{\varkappa\le 2m_{n}+2\tau+\varkappa_1+\ldots+\varkappa_{n-1}} r_n^{(2m_n+2\tau-\varkappa+\varkappa_1+\ldots+\varkappa_{n_1})p}|\langle f, \mathbb{B}_n(2^{d+1}\cdot-\varkappa)\rangle|^p\\=
\sum_{\varkappa\in\mathbb{Z}}|\langle f, \mathbb{B}_n(2^{d+1}\cdot-\varkappa)\rangle|^p\sum_{m_1\ge 0}r_1^{m_1p}\ldots\sum_{m_n\ge 0}r_n^{m_np}
\sum_{\varkappa_1\le 2m_1} r_1^{(2m_1-\varkappa_1)p}\ldots\sum_{\varkappa_{n-1}\le 2m_{n-1}} r_1^{(2m_{n-1}-\varkappa_{n-1})p}\\\sum_{2\tau\ge\varkappa- 2m_{n}-\varkappa_1-\ldots-\varkappa_{n-1}} 
r_n^{(2m_n+2\tau-\varkappa+\varkappa_1+\ldots+\varkappa_{n-1})p}
\le \prod_{j=1}^n\frac{1}{(1-r_j^p)^2}\sum_{\varkappa\in\mathbb{Z}}|\langle f, \mathbb{B}_n(2^{d+1}\cdot-\varkappa)\rangle|^p.\end{multline*} Analogously, for $p>1$
\begin{equation*}
{|\gamma_n|^{-1}}\biggl(\sum_{\tau\in\mathbb{Z}}|\langle f,\psi^+_{r_1,\ldots,r_n}(2^d\cdot-\tau)\rangle|^p\biggr)^{\frac{1}{p}}
\le \prod_{j=1}^n\frac{1}{(1-r_j)^2}\biggl(\sum_{\varkappa\in\mathbb{Z}}|\langle f, \mathbb{B}_n(2^{d+1}\cdot-\varkappa)\rangle|^p\biggr)^{\frac{1}{p}}.\end{equation*}
Since for $p\le 1$ \begin{multline*}
\biggl(\sum_{\varkappa\in\mathbb{Z}}|\langle f, \mathbb{B}_n(2^{d+1}\cdot-\varkappa)\rangle|^p\biggr)^{\frac{1}{p}}\le\Biggl(\sum_{\varkappa\in\mathbb{Z}}\biggl|\sum_{k=0}^{n+1}\frac{(-1)^{k}(n+1)!}{k!(n+1-k)!}\ \langle f, 
B_n(2^{d+1}\cdot -\varkappa-k)\rangle\biggr|^p\\+\biggl\{\sum_{j_1\in\{1,\ldots,n\}}\frac{1}{r_{j_1}}\!\biggr\}^p
\sum_{\varkappa\in\mathbb{Z}}\biggl|\sum_{k=0}^{n+1}\frac{(-1)^{k}(n+1)!}{k!(n+1-k)!}\ \langle f, 
B_n(2^{d+1}\cdot -\varkappa-k+1)\rangle\biggr|^p\\
+\biggl\{\sum_{\substack{j_1,j_2\in\{1,\ldots,n\}\\ j_1\not=j_2}}\!\!\!\frac{1}{r_{j_1}r_{j_2}}\!\biggr\}^p
\sum_{\varkappa\in\mathbb{Z}}\biggl|\sum_{k=0}^{n+1}\frac{(-1)^{k}(n+1)!}{k!(n+1-k)!}\ \langle f, 
B_n(2^{d+1}\cdot -\varkappa-k+2)\rangle\biggr|^p\\\ldots +
\biggl\{\sum_{\substack{j_1,\ldots,j_c\in\{1,\ldots,n\}\\ j_1\not=\ldots\not=j_c}}\!\!\frac{1}{r_{j_1}\ldots r_{j_c}}\!\biggr\}^p
\sum_{\varkappa\in\mathbb{Z}}\biggl|\sum_{k=0}^{n+1}\frac{(-1)^{k}(n+1)!}{k!(n+1-k)!}\ \langle f, 
B_n(2^{d+1}\cdot -\varkappa-k+c)\rangle\biggr|^p\\\ldots +
\biggr\{\frac{1}{r_{1}\ldots r_{n}}\biggr\}^p
\sum_{\varkappa\in\mathbb{Z}}\biggl|\sum_{k=0}^{n+1}\frac{(-1)^{k}(n+1)!}{k!(n+1-k)!}\ \langle f, 
B_n(2^{d+1}\cdot -\varkappa+n-k)\rangle\biggr|^p
\Biggr)^{\frac{1}{p}}\\\le 
\Biggl(1+\sum_{k=1}^n\biggl\{\sum_{\substack{j_1,\ldots,j_k\in\{1,\ldots,n\}\\ j_1\not=\ldots\not= j_k}}\frac{1}{r_{j_1}\ldots r_{j_k}}\biggr\}^p\Biggr)^{\frac{1}{p}}
\Biggl(
\sum_{\varkappa\in\mathbb{Z}}\biggl|\sum_{k=0}^{n+1}\frac{(-1)^{k}(n+1)!}{k!(n+1-k)!}\ \langle f, 
B_n(2^{d+1}\cdot -\varkappa+n-k)\rangle\biggr|^p
\Biggr)^{\frac{1}{p}},
\end{multline*} and, similarly, for $p>1$
\begin{multline*}
\biggl(\sum_{\varkappa\in\mathbb{Z}}|\langle f, \mathbb{B}_n(2^{d+1}\cdot-\varkappa)\rangle|^p\biggr)^{\frac{1}{p}}\\\le 
\biggl(1+\sum_{k=1}^n\sum_{\substack{j_1,\ldots,j_k\in\{1,\ldots,n\}\\ j_1\not=\ldots\not= j_k}}\frac{1}{r_{j_1}\ldots r_{j_k}}\biggr)
\Biggl(
\sum_{\varkappa\in\mathbb{Z}}\biggl|\sum_{k=0}^{n+1}\frac{(-1)^{k}(n+1)!}{k!(n+1-k)!}\ \langle f, 
B_n(2^{d+1}\cdot -\varkappa+n-k)\rangle\biggr|^p
\Biggr)^{\frac{1}{p}},
\end{multline*} we obtain, taking into account \eqref{ess} and \eqref{ddiff},
\begin{multline*}
\biggl(\sum_{\tau\in\mathbb{Z}}|\langle f,\psi^+_{r_1,\ldots,r_n}(2^d\cdot-\tau)\rangle|^p\biggr)^{\frac{1}{p}}\le
\frac{|\gamma_n|}{2^{n+1}}\left(\sum_{\tau\in\mathbb{Z}}\left| \langle f,B_{2n+1;\, d+1,\tau}^{(n+1)}\rangle\right|^p\right)^{\frac{1}{p}}\\\times
\begin{cases}
\Bigl(\prod_{j=1}^n\frac{1}{(1-r_j^p)^2}\Bigr)^{\frac{1}{p}}
\biggl(1+\sum_{k=1}^n\Bigl\{\sum_{\substack{j_1,\ldots,j_k\in\{1,\ldots,n\}\\ j_1\not=\ldots\not= j_k}}\frac{1}{r_{j_1}\ldots r_{j_k}}\Bigr\}^p\biggr)^{\frac{1}{p}}, & p\le 1\\
\prod_{j=1}^n\frac{1}{(1-r_j)^2}\Bigl(1+\sum_{k=1}^n\sum_{\substack{j_1,\ldots,j_k\in\{1,\ldots,n\}\\ j_1\not=\ldots\not= j_k}}\frac{1}{r_{j_1}\ldots r_{j_k}}\Bigr), & p>1\end{cases}.
\end{multline*} For the reverse estimate we write, by \eqref{psi_fint},
\begin{equation*}
\left(\sum_{\tau\in\mathbb{Z}}\left| \langle f,B_{2n+1;\, d+1,\tau}^{(n+1)}\rangle \right|^p\right)^{\frac{1}{p}}=\left(\sum_{\tau\in\mathbb{Z}}\left| \langle f,B_{2n+1;\, d+1,-n+\tau}^{(n+1)}\rangle\right|^p\right)^{\frac{1}{p}}=\frac{2^{2n+1}}{|\widetilde{\gamma}_n|\delta_n}
\left(\sum_{\tau\in\mathbb{Z}}\left| \langle f,\Psi_n(\cdot-\tau)\rangle\right|^p\right)^{\frac{1}{p}}.\end{equation*} By the construction of $\Psi_n$,
\begin{multline*}
\left(\sum_{\tau\in\mathbb{Z}}\left| \langle f,\Psi_n(\cdot-\tau)\rangle\right|^p\right)^{\frac{1}{p}}\\\le \Biggl[
\biggl(\sum_{\tau\in\mathbb{Z}}|\langle f,\psi_{r_1,\ldots,r_n}^+(2^d\cdot-\tau)\rangle|^p\biggr)^{\frac{1}{p}}+
\biggl(\sum_{\tau\in\mathbb{Z}}|\langle f,\psi^+_{r_1,\ldots,r_n}(2^d\cdot-\tau+1/2)\rangle|^p\biggr)^{\frac{1}{p}}\Biggr]\\\times
\begin{cases}2^{n}\Bigl(\prod_{j=1}^n(1+r_j^p)^2\Bigr)^{\frac{1}{p}}, & p\le 1 \\
2^n\prod_{j=1}^n(1+r_j)^2, & p>1\end{cases}.\end{multline*} The required two--sided estimate for the second term in $\|f\|_{B_{pq}^s(\mathbb{R})}^\circledast$ follows from the inequalities above and the fact that $\|f\|_{B^{s}_{pq}(\mathbb{R})}^\ast$ with $\{h_{-10}^n,h_{00}^n\}$ equal to $\{\phi_n(\cdot\pm \frac{1}{2}),\psi^\pm_{n}(\cdot\pm \frac{1}{2})\}$ or $\{\phi_n(\cdot),\psi^\pm_{n}(\cdot)\}$ are equivalent.\end{proof}

\end{document}